\documentclass{amsart}


\usepackage[stable]{footmisc}
\usepackage{tikz-cd,amssymb,amsmath,amsthm,mathrsfs,mathscinet,anyfontsize,latexsym,graphics,mathdots,hyperref,bm}
\usepackage[all]{xy}

\newcommand{\grdim}{\operatorname{grdim}}
\newcommand{\codim}{\operatorname{codim}}
\newcommand{\cvr}{\operatorname{-cvr}}
\newcommand{\nilp}{\operatorname{nilp}}
\newcommand{\tors}{\mathscr{T}}
\newcommand{\torf}{\mathscr{F}}
\newcommand{\Tors}{\mathfrak{T}}
\newcommand{\Torf}{\mathfrak{F}}
\newcommand{\Vgr}{\mathcal{V}}
\newcommand{\argmin}{\operatorname{argmin}}
\newcommand{\dat}{\mathfrak{d}}
\newcommand{\linT}{\mathcal{T}}
\newcommand{\sprod}[2]{\left\langle{#1},{#2}\right\rangle}
\newcommand{\Mult}{\mathfrak{M}}
\newcommand{\Exts}{\mathcal{E}}
\renewcommand{\subset}{\subseteq}
\renewcommand{\supset}{\supseteq}
\newcommand{\cn}{\mu}
\newcommand{\cpp}{\smile}
\newcommand{\zerocap}{\mathcal{Q}}
\newcommand{\prm}{\lambda}
\newcommand{\Gm}{\mathbb{G}_m}
\newcommand{\Prj}{\mathbb{P}}
\newcommand{\op}{\circ}
\newcommand{\Der}{\operatorname{Der}}
\newcommand{\Fac}{\operatorname{Fac}}
\newcommand{\GL}{\operatorname{GL}}
\newcommand{\Cat}{\mathcal{C}}
\newcommand{\Irrcomp}{\operatorname{Comp}}
\newcommand{\bfd}{\bm{d}}
\newcommand{\bfe}{\bm{e}}
\newcommand{\m}{\mathfrak{m}}
\newcommand{\n}{\mathfrak{n}}

\newcommand{\N}{\mathbb{N}}
\newcommand{\orb}{\mathcal{O}}
\newcommand{\Hom}{\operatorname{Hom}}
\newcommand{\Lin}{\operatorname{Lin}}
\newcommand{\LLin}{\mathcal{L}}
\newcommand{\Ext}{\operatorname{Ext}}
\newcommand{\ext}{\operatorname{ext}}
\newcommand{\Mat}{\operatorname{Mat}}
\renewcommand{\Im}{\operatorname{Im}}
\newcommand{\End}{\operatorname{End}}
\newcommand{\Ker}{\operatorname{Ker}}
\newcommand{\Coker}{\operatorname{Coker}}
\newcommand{\Aut}{\operatorname{Aut}}
\newcommand{\sm}[4]{{\bigl(\begin{smallmatrix}{#1}&{#2}\\{#3}&{#4}\end{smallmatrix}\bigr)}}

\newtheorem{theorem}{Theorem}[section]
\newtheorem{lemma}[theorem]{Lemma}
\newtheorem{definition}[theorem]{Definition}
\newtheorem{proposition}[theorem]{Proposition}
\newtheorem{corollary}[theorem]{Corollary}
\theoremstyle{remark}
\newtheorem{remark}[theorem]{Remark}
\newtheorem{example}[theorem]{Example}

\numberwithin{equation}{section}

\begin{document}

\title[Irreducible components of nilpotent varieties {I}]{A binary operation on irreducible components of
Lusztig's nilpotent varieties {I}: definition and properties}

\author{Avraham Aizenbud}
\email{aizenr@gmail.com}
\address{Department of Mathematics\\Weizmann Institute of Science\\Rehovot 7610001 Israel}
\author{Erez Lapid}
\email{erez.m.lapid@gmail.com}
\address{Department of Mathematics\\Weizmann Institute of Science\\Rehovot 7610001 Israel}
\date{\today}
\maketitle

\begin{abstract}
We define a binary operation on the set of irreducible components of Lusztig's nilpotent varieties of a quiver.
We study commutativity, cancellativity and associativity of this operation.
We focus on rigid irreducible components and discuss inductive ways to construct them.
\end{abstract}

\setcounter{tocdepth}{1}
\tableofcontents

\section{Introduction}

In this paper we define and study a binary operation on the set $\Irrcomp$ of irreducible components of Lusztig's nilpotent varieties
(of all graded dimensions) pertaining to a finite quiver $Q$ without loops.
Recall that $\Irrcomp$ parameterizes Lusztig's canonical basis of the negative part $U^-$ of the quantum enveloping algebra pertaining to $Q$,
as well as the vertices of the corresponding crystal graph $B(\infty)$ \cite{MR1088333, MR1182165, MR1115118, MR1458969}.
In fact, the binary operation extends the basic crystal operators.
In the case where the underlying graph of $Q$ is a simply laced Dynkin diagram, $\Irrcomp$ is in bijection with
the equivalence classes of finite-dimensional representations of $Q$.
By Gabriel's theorem, the indecomposable representations of $Q$ determine the positive roots in the root system defined by the Dynkin diagram \cite{MR0332887, MR0393065}.
Hence, in this case, $\Irrcomp$ is in bijection with multisets of positive roots.

The nilpotent varieties classify nilpotent modules of a given graded dimension of the preprojective algebra $\Pi$ of $Q$.
(The nilpotency condition is redundant in the Dynkin case.)
A special role is played by the rigid irreducible components, i.e., those containing an open orbit under the natural action
of the group of grading preserving linear isomorphisms.
Their analogues, the so-called ``real'' simple modules of either the quiver Hecke algebras (aka.\ KLR algebras)
or quantum groups, were studied by many people, including Geiss, Hernandez, Kang, Kashiwara, Kim, Leclerc, Oh and Schr\"oer,
in the context of monoidal categorification of cluster algebras \cite{MR2682185, MR2242628, MR3758148}.
We refer the reader to \cite{MR3966729} and \cite{1902.01432} for recent surveys on this topic.

The binary operation on $\Irrcomp$ is modeled after a construction of Lusztig.
Given $C_1,C_2\in\Irrcomp$ we define $C_1*C_2$ to be the Zariski closure of the constructible set formed by all
possible extensions of $x_1$ by $x_2$ for all $(x_1,x_2)$ in a suitable open subset of $C_1\times C_2$.
Our first main result (Theorem \ref{thm: app}) is that $C_1*C_2$ is indeed an irreducible component.
In the case where $C_1$ or $C_2$ corresponds to a simple nilpotent module (i.e., the graded dimension is the indicator function of a vertex $i$ of $Q$)
this gives rise to the crystal operator $\tilde f_i$ of $B(\infty)$.
Crawley-Boevey and Schr\"oer studied this construction for the module varieties of a general finitely generated algebra, and showed that
it gives rise to an irreducible subvariety, which under certain circumstances is an irreducible component \cite{MR1944812}.
More recently, in the case of nilpotent varieties, Baumann, Kamnitzer and Tingley treated cases
of ``torsion pairs'' \cite{MR3270589}.\footnote{We are grateful to the referee for pointing out this reference to us.}
However, the fact that for nilpotent varieties the result is always an irreducible component seems to have gone unnoticed until now.
This operation is neither commutative nor associative in general. However, we can analyze the lack
of commutativity and associativity. We also study the rigidity of $C_1*C_2$ in terms of conditions on $C_1$ and $C_2$.

It is known that $U^-$ is categorified by the KLR algebra $R$.
(See e.g.\ \cite[Theorem 2.1.2]{MR3758148} and the references therein for a precise statement.
We will freely use the terminology of [ibid.] in the following discussion.)
Moreover, if $R$ is symmetric, then in this categorification, at least in characteristic $0$, the dual canonical basis corresponds
to the self-dual simple $R$-modules \cite{MR2837011} (see also \cite[Theorem 2.1.4]{MR3758148}).
Denote by $C\mapsto\pi(C)$ the resulting bijection between $\Irrcomp$ and the set of equivalence classes
of simple self-dual $R$-modules.
It is conjectured that if $C$ is rigid, then $\pi(C)$ is real. (See \cite[Conjecture 18.1]{MR2822235}
for a related conjecture.)
Assuming this, it follows from \cite{MR3314831} that if $C_1$ or $C_2$ is rigid, then the socle of
$\pi(C_1)\circ\pi(C_2)$ is simple.
Here, $\circ$ denotes the convolution product defined in \cite[\S2.1]{MR3758148}.
It is then natural to conjecture that this socle is $\pi(C_1*C_2)$.

In the case $Q=A_n$, one can reformulate the conjecture in terms
of representation theory of the general linear groups over a local, non-archimedean field.
This will be done in the second part of the paper, together with a verification of the conjecture in a special case.

We now describe the contents of the paper in more detail.

In \S\ref{sec: prep} we recall the well known algebraic/geometric objects
pertaining to quivers and their representation, namely preprojective algebras and the nilpotent varieties
defined by Lusztig.
As already mentioned, the irreducible components of the nilpotent varieties are especially important.
We recall the formula of Crawley-Boevey for the dimension of $\Ext^1$ of two modules of the preprojective algebra
\cite{MR1781930} and the switch-duality of $\Ext^1$ \cite{MR2360317}.
The case where the underlying graph of the quiver is a simply laced Dynkin diagrams is discussed in detail.

In \S\ref{sec: binary} we introduce the binary operation $*$ on $\Irrcomp$, which is the main object of the paper.
The fact that it is well defined relies on a simple dimension counting argument using Crawley-Boevey's formula and
the fact that the nilpotent varieties are of pure dimension.

The central notion of rigid modules and irreducible components is recalled in \S\ref{sec: rigid}.

Cancellativity of rigid irreducible components with respect to $*$ and a divisibility criterion
are the subject matter of \S\ref{sec: cancel}.

Commutativity and associativity of $*$ (or lack thereof) are discussed in \S\ref{sec: comm} and \S\ref{sec: assoc},
respectively.

Further technical results are presented in \S\ref{sec: further}. They will be used in the second part
of the paper.

Finally, in \S\ref{sec: taudata} we go back to the description of modules of the preprojective algebra.
Roughly speaking, a $\Pi$-module can be viewed as a representation of $Q$ with supplementary data.
Following Ringel \cite{MR1648647}, in the case where the quiver does not admit cycles (which includes the Dynkin case)
this data can be formulated in terms of the Coxeter functors of Bernstein--Gel'fand--Ponomarev \cite{MR0393065}
using their cohomological properties \cite{MR607140}.
As a complement to \cite[\S8]{MR2360317} we give a switch-dual short exact sequence for $\Ext^1_\Pi$
which is useful in computations.

\subsection*{Acknowledgement}
We are grateful to Bernard Leclerc for useful correspondence.
The second-named author would like to thank the Hausdorff Research Institute for Mathematics in Bonn for its
generous hospitality in July 2021.
Conversations with Jan Schr\"oer during that time were particularly helpful and we are very grateful to him.
Last but not least, we are greatly indebted to the referee for reading the paper carefully, pointing out several
inaccuracies in the original version, and making numerous suggestions,
including a simplification of the proof of Theorem \ref{thm: app}, which also removed the characteristic $0$ assumption
made in the first version of the paper, as well as streamlining the discussion of \S\ref{sec: taudata}.

\section{Quivers, preprojective algebras and nilpotent varieties} \label{sec: prep}

Throughout this paper we fix an algebraically closed field $K$.
All pertinent objects (vector spaces, algebras, tensor products, varieties, morphisms, dimensions, etc.) are implicitly over $K$.
By convention, unless indicated otherwise, all modules are left-modules and all ideals are two-sided.
We denote the dual of a vector space $V$ by $V^*$.
For vector spaces $U,V$ we denote by $\Lin(U,V)$ the vector space of linear maps from $U$ to $V$.
If $U=V$, we write for brevity $\Lin(V)=\Lin(V,V)$.

For any finite set $A$, let $\N A$ be the free abelian monoid generated by $A$.
We think of an element in $\N A$ as either a set of elements of $A$ with multiplicities (i.e., a multiset),
or a function from $A$ to $\N$.

In this section we recall some standard definitions and facts about quivers, their representations
and related cohomological and algebraic geometric aspects.
We mainly follow Lusztig \cite{MR1088333} and Ringel \cite{MR1648647}.

Let $Q=(I,\Omega)$ be a finite quiver without loops with vertex set $I$.
The data is encoded by a function $\Omega\rightarrow I\times I$, $h\mapsto (h',h'')$
(representing arrows $h'\xrightarrow{h}h''$) such that $h'\ne h''$ for all $h\in\Omega$.
Let $KQ$ be the path algebra of $Q$. Recall that it is hereditary.
For each $i\in I$, let $e_i$ be the idempotent of $KQ$ corresponding to the trivial path with vertex $i$.
A representation of $Q$ is an $I$-graded vector space
$V=\oplus_{i\in I}V_i$ together with linear maps $A_h:V_{h'}\rightarrow V_{h''}$ for each $h\in\Omega$.
This is the same as a $KQ$-module such that $e_iV=V_i$ for all $i\in I$.
Henceforth, unless otherwise mentioned, we will only consider finite-dimensional representations.

We write $\Vgr$ for the category of finite-dimensional $I$-graded vector spaces and for simplicity write
$V\in\Vgr$ to mean that $V$ is an object in $\Vgr$. If $V$ has graded dimension $\grdim V=\bfd\in\N I$,
then we write $V\in\Vgr(\bfd)$.

Let $V\in\Vgr(\bfd)$. The representations of $Q$ on $V$ form an affine space $R_Q(V)$ of dimension
\[
\sum_{h\in\Omega}\bfd(h')\bfd(h'').
\]
The group $G_V=\prod_{i\in I}\GL(V_i)$ of grading-preserving automorphisms of $V$ acts linearly on $R_Q(V)$ by conjugation.
The orbits are the isomorphism classes of representations of $Q$ of graded dimension $\bfd$.

It is easy to write a projective resolution of $KQ$ as a bi-module over itself.
Consequently, for any $Q$-representations $M$ and $N$ with data $\mu_h:M_{h'}\rightarrow M_{h''}$ and $\nu_h:N_{h'}\rightarrow N_{h''}$,
$h\in\Omega$, we have an exact sequence
\begin{multline} \label{eq: ESQ}
0\rightarrow\Hom_Q(M,N)\rightarrow\oplus_{i\in I}\Lin(M_i,N_i)\xrightarrow{\alpha_{\mu,\nu}}
\oplus_{h\in\Omega}\Lin(M_{h'},N_{h''})\rightarrow\\\Ext^1_Q(M,N)\rightarrow0
\end{multline}
where the middle morphism is defined by
\begin{equation} \label{def: alphaMN}
\alpha_{\mu,\nu}((A_i)_{i\in I})=(\nu_hA_{h'}-A_{h''}\mu_h)_{h\in\Omega}.
\end{equation}
In particular, if $M$ and $N$ are of graded dimension $\bfd$ and $\bfe$ respectively, then
\begin{multline} \label{eq: EulerQ}
\dim\Hom_Q(M,N)-\dim\Ext^1_Q(M,N)=\sprod{\bfd}{\bfe}_Q\\:=\sum_{i\in I}\bfd(i)\bfe(i)-\sum_{h\in\Omega}\bfd(h')\bfe(h'').
\end{multline}
Note that if $\grdim V=\bfd$, then $\sprod{\bfd}{\bfd}_Q=\dim G_V-\dim R_Q(V)$.

For every $i\in I$ denote by $S_Q(i)$ the (simple) representation whose graded dimension is the indicator
function of $i$. It is unique up to isomorphism. Moreover, if $\bfd$ is concentrated at $i$,
i.e., if $\bfd(j)\ne0$ for all $j\ne i$, then up to isomorphism, $\overbrace{S_Q(i)\oplus\dots\oplus S_Q(i)}^{\bfd(i)}$
is the unique representation of graded dimension $\bfd$.

By definition, a representation $M$ of $Q$ is nilpotent if it satisfies the following equivalent conditions.
\begin{enumerate}
\item The augmentation ideal of $KQ$ generated by the arrows acts nilpotently on $M$.
\item Every path of sufficiently large length acts trivially on $M$.
\item The closure of the $G_V$-orbit of $M$ contains $0$.
\item Every simple subquotient of $M$ is of the form $S_Q(i)$ for some $i\in I$.
\end{enumerate}
These conditions are automatic if $Q$ does not admit oriented cycles, i.e., if $KQ$ is finite-dimensional.
(Henceforth, by a cycle we always mean an oriented cycle.)
Note that an extension of a nilpotent representation by a nilpotent representation is nilpotent.
The subset $R_Q^{\nilp}(V)\subset R_Q(V)$ of nilpotent representations is connected, Zariski closed and $G_V$-stable.

For any arrow $h$ let $h^\op$ be the opposite arrow (i.e., $(h^\op)'=h''$ and $(h^\op)''=h'$).
Let $Q^\op=(I,\Omega^\op)$ be the opposite quiver where $\Omega^\op=\{h^\op\mid h\in\Omega\}$
and let $\overline{Q}$ be the double quiver $(I,H)$ where $H=\Omega\cup\Omega^\op$ (disjoint union).
The map $V\mapsto V^*$ is a duality between the category of representations of $Q$ and those of $Q^{\op}$.
We can also identify canonically $R_{Q^\op}(V)$ with the dual vector space of $R_Q(V)$ (by taking the trace of the product)
and $R_{\overline{Q}}(V)=R_Q(V)\times R_{Q^{\op}}(V)$ with the cotangent bundle $T^*(R_Q(V))$ of $R_Q(V)$.

By definition, the preprojective algebra $\Pi=\Pi_Q$ is the quotient of the path algebra of $\overline{Q}$
by the ideal generated by $\sum_{h\in\Omega}[h,h^{\op}]$.
Let $R_\Pi(V)$ be the space of $\Pi_Q$-module structures on $V$
such that $e_iV=V_i$ for all $i$. This is a closed subvariety of $R_{\overline Q}(V)$.
It is the fiber over $0$ of the moment map
\begin{equation} \label{eq: mmap}
T^*(R_Q(V))\rightarrow\oplus_{i\in I}\Lin(V_i).
\end{equation}
The projection
\[
\pi_Q:R_\Pi(V)\rightarrow R_Q(V)
\]
is the pullback with respect to the homomorphism (in fact, embedding) $KQ\rightarrow\Pi_Q$.

Let $\Lambda(V)=\Lambda_Q(V)=R_\Pi(V)\cap R_{\overline{Q}}^{\nilp}(V)$
be the Zariski closed, $G_V$-stable subvariety of nilpotent $\Pi$-modules.
This is Lusztig's nilpotent variety studied in \cite[\S12]{MR1088333}.
It is of pure dimension $\dim R_Q(V)$ and in fact a Lagrangian subvariety of $T^*(R_Q(V))$.

We denote by $\Irrcomp(V)$ the (finite) set of irreducible components of $\Lambda(V)$.
Since the group $G_V$ is connected, each irreducible component of $\Lambda(V)$ is $G_V$-stable.
Thus, $\Irrcomp(V)$ depends only on the graded dimension of $V$. We will therefore also use the notation
$\Irrcomp(\bfd)$ for any $\bfd\in\N I$.

For instance, if $\bfd$ is concentrated at a single vertex, then $R_\Pi(V)=\Lambda(V)=\{0\}$.
In particular, $\Irrcomp(\bfd)$ is a singleton in this case and we call the element of $\Irrcomp(\bfd)$ \emph{unicolor}.
We denote by $S(i)$ the (simple) $\Pi$-module whose graded dimension is the indicator function of $i\in I$.
By abuse of notation, we also denote by $S(i)$ the corresponding irreducible component.

If $Q$ does not have cycles, then for any $V\in\Vgr$, $R_Q(V)\times\{0\}$
and $\{0\}\times R_{Q^{\op}}(V)$ are irreducible components of $\Lambda(V)$.

Set
\[
\Irrcomp=\bigcup_{\bfd\in\N I}\Irrcomp(\bfd)\ \ \ \ \text{(disjoint union)}.
\]

Let $(\cdot,\cdot)$ be the bi-additive symmetric form on $\N I$ given by
\begin{equation} \label{eq: def[]}
(\bfd,\bfe)=\sprod{\bfd}{\bfe}_Q+\sprod{\bfd}{\bfe}_{Q^\op}=\sprod{\bfd}{\bfe}_Q+\sprod{\bfe}{\bfd}_Q.
\end{equation}
Note that if $\grdim V=\bfd$, then
\begin{equation} \label{eq: (dd)}
(\bfd,\bfd)=2(\dim G_V-\dim\Lambda(V)).
\end{equation}
By a result of Crawley-Boevey \cite[Lemma 1]{MR1781930} for any $\Pi$-modules $M$ and $N$
of graded dimensions $\bfd$ and $\bfe$ respectively we have
\begin{equation} \label{eq: CB}
\dim\Hom_\Pi(M,N)-\dim\Ext^1_\Pi(M,N)+\dim\Hom_\Pi(N,M)=(\bfd,\bfe).
\end{equation}
In particular,
\begin{equation} \label{eq: CB2}
\dim\Ext^1_\Pi(M,M)=2\dim\End_\Pi(M)-(\bfd,\bfd).
\end{equation}
In fact, (see \cite[\S4.2]{MR3270589} or \cite[\S8]{MR2360317} and the references therein)
we have a functorial isomorphism
\begin{equation} \label{eq: extdalty}
\Ext_\Pi^1(N,M)\simeq\Ext_\Pi^1(M,N)^*
\end{equation}
and an injection\footnote{If no connected component of $Q$ is of Dynkin type, then
in fact $\Ext_\Pi^2(N,M)\simeq\Hom_\Pi(M,N)^*$.}
\begin{equation} \label{eq: extsrjct}
\Ext_\Pi^2(N,M)\hookrightarrow\Hom_\Pi(M,N)^*.
\end{equation}

Note that we can rewrite \eqref{eq: CB} as
\begin{align*}
\dim\Ext_\Pi^1(M,N)=&(\dim\Lambda(V\oplus V')-\dim G_{V\oplus V}\cdot(M\oplus N))-\\&
(\dim\Lambda(V)-\dim G_V\cdot M)-(\dim\Lambda(V')-\dim G_V\cdot N)
\end{align*}
where $M\in R_\Pi(V)$ and $N\in R_\Pi(V')$.

For any $C_1,C_2\in\Irrcomp$ we set
\begin{gather*}
\hom_\Pi(C_1,C_2)=\min\{\dim\Hom_\Pi(x_1,x_2)\mid x_1\in C_1,x_2\in C_2\},\\
\ext_\Pi^1(C_1,C_2)=\min\{\dim\Ext^1_\Pi(x_1,x_2)\mid x_1\in C_1,x_2\in C_2\}.
\end{gather*}
We recall that $\dim\Hom_\Pi(x_1,x_2)$ and $\dim\Ext_\Pi^1(x_1,x_2)$ are upper semicontinuous functions,
so that the minimal loci
$\argmin_{C_1\times C_2}\dim\Hom_\Pi(x_1,x_2)$ and $\argmin_{C_1\times C_2}\dim\Ext^1_\Pi(x_1,x_2)$ are open in $C_1\times C_2$. By \eqref{eq: CB} we have
\[
\hom_\Pi(C_1,C_2)-\ext^1_\Pi(C_1,C_2)+\hom_\Pi(C_2,C_1)=(\bfd,\bfe)
\]
for any $C_1\in\Irrcomp(\bfd)$, $C_2\in\Irrcomp(\bfe)$.

The preprojective algebra is isomorphic to its opposite algebra.
(More precisely, $\Pi_Q^{\op}\simeq\Pi_{Q^{\op}}=\Pi_Q$, the first isomorphism takes a path to its opposite.)
This gives a self-duality $M\mapsto M^*$ on finite-dimensional $\Pi$-modules.
It yields an isomorphism $\Lambda(V)\rightarrow\Lambda(V^*)$,
and a bijection $\Irrcomp(V)\rightarrow\Irrcomp(V^*)$, which we also denote by $C\mapsto C^*$.
We therefore get an involution on $\Irrcomp(\bfd)$ for any $\bfd\in\N I$.

Up to isomorphism, the preprojective algebra, and hence its nilpotent varieties, depend only on $\overline{Q}$
(or in other words, on the underlying graph of $Q$, without orientation) \cite[\S12.15]{MR1088333}.
These isomorphisms depend on certain choices of signs and square-roots of $-1$,
but the induced bijection on the set of irreducible components does not depend on this choice.

\subsection*{The Dynkin case} (\cite{MR0332887} \cite{MR0393065} \cite{MR545362}  \cite[\S14]{MR1088333}) \label{sec: Dynkin}

In this subsection we assume that $Q$ is of Dynkin type, i.e., it satisfies the following equivalent conditions.
\begin{enumerate}
\item The underlying graph of $Q$ is a simply laced Dynkin diagram.
\item The quadratic form $\sprod{\bfd}{\bfd}_Q$ is positive-definite.
\item $R_Q(V)$ admits an open $G_V$-orbit for all $V\in\Vgr$.
\item $R_Q(V)$ admits finitely many $G_V$-orbits for all $V\in\Vgr$.
\item Up to isomorphism, there are only finitely many indecomposable representations of $Q$, i.e., $KQ$ is representation finite.
\item $\Pi_Q$ is finite-dimensional.
\end{enumerate}
The map $M\mapsto\grdim M$ is a bijection between the equivalence classes
of indecomposable representations of $Q$ and the set
\[
\Psi=\{\bfd\in\N I\mid\sprod{\bfd}{\bfd}_Q=1\}.
\]
We can identify $\Psi$ with the set of positive roots in the root system corresponding to the Dynkin diagram.
The set of simple roots is $I$ itself.

For $\beta\in\Psi$ denote by $M_Q(\beta)$ the indecomposable representation with graded dimension $\beta$.
It is simple if and only if $\beta\in I$.
Every indecomposable representation $M$ is a brick (i.e., $\End_Q(M)=K$) and is rigid (i.e., $\Ext_Q^1(M,M)=0$).

We write a typical element of $\Mult:=\N \Psi$ as a formal sum $\m=\beta_1+\dots+\beta_k$, $\beta_i\in\Psi$.
Then,
\[
\m=\beta_1+\dots+\beta_k\mapsto M_Q(\m)=M_Q(\beta_1)\oplus\dots\oplus M_Q(\beta_k)
\]
is a bijection between $\Mult$ and the isomorphism classes of representations of $Q$.
Extend the inclusion $\Psi\rightarrow\N I$ to an additive map
\[
\grdim:\Mult\rightarrow\N I.
\]
Then, $\grdim M_Q(\m)=\grdim\m$.
Moreover, $\Lambda(V)=R_\Pi(V)$ (i.e., every $\Pi$-module is nilpotent)
and $\Lambda(V)$ is the disjoint union of the conormal bundles to the (finitely many) $G_V$-orbits
in $R_Q(V)$.
Thus, the irreducible components of $\Lambda(V)$ are the closure of these conormal bundles.
Therefore, $\Irrcomp(\bfd)$ is in bijection with the isomorphism classes of $Q$-representations of graded dimension $\bfd$.
This gives rise to a bijection
\[
\prm_Q:\Mult\rightarrow\Irrcomp
\]
such that $\Irrcomp(\bfd)=\prm_Q(\Mult(\bfd))$
where $\Mult(\bfd)=\{\m\in\Mult\mid\grdim\m=\bfd\}$.
We denote by $\cn_Q(\m)$ the conormal bundle of the orbit of $M_Q(\m)$.
Thus, if $\grdim V=\grdim\m$, then $\cn_Q(\m)$ is the inverse image of the $G_V$-orbit of $M_Q(\m)$ under $\pi_Q$ and $\prm_Q(\m)$ is the Zariski closure of $\cn_Q(\m)$ in $\Lambda(V)$. Of course, we also have
a bijection
\[
\prm_{Q^\op}:\Mult\rightarrow\Irrcomp
\]
since the preprojective algebra pertaining to $Q^{\op}$ coincides with $\Pi$.

In particular, $\m\in\N I$ (i.e., all the roots in $\m$ are simple) if and only if
$\prm_Q(\m)=\pi_Q^{-1}(\{0\})=\{0\}\times R_{Q^{\op}}(V)$ if and only if
$\prm_{Q^{\op}}(\m)=R_Q(V)\times\{0\}$ where $V\in\Vgr(\grdim\m)$.

Also, for any $\beta\in\Psi$ we have $\prm_Q(\beta)=R_Q(V)\times\{0\}$
(for $V\in\Vgr(\beta)$), i.e., the $G_V$-orbit of $M_Q(\beta)$ is open in $R_Q(V)$.

We remark that in the context of a Lie group $G$ acting linearly,
with finitely many orbits on a vector space $V$,
the fiber of $0$ of the moment map $T^*(V)\rightarrow\operatorname{Lie}(G)^*$ was considered in \cite{MR0390138},
where it was shown to be of pure dimension $\dim V$.

We have
\begin{equation} \label{eq: dMQm}
M_Q(\m)^*\simeq M_{Q^{\op}}(\m)
\end{equation}
and consequently,
\[
\prm_Q(\m)^*=\prm_{Q^{\op}}(\m)
\]
for any $\m\in\Mult$.

\section{Extensions} \label{sec: binary}
For the rest of the paper we fix a quiver $Q$.

Let $V^i\in\Vgr(\bfd_i)$, $i=1,2$ and let $V\in\Vgr(\bfd)$ with $\bfd=\bfd_1+\bfd_2$.
For any $G_{V^1}\times G_{V^2}$-stable constructible subset $S$ of $\Lambda(V^1)\times\Lambda(V^2)$
let
\begin{align*}
\Exts_2(S)=\{x\in R_\Pi(V)\mid \,&\exists\text{ a short exact sequence }\\
&0\rightarrow x_2\rightarrow x\rightarrow x_1\rightarrow 0\text{ with }(x_1,x_2)\in S\}.
\end{align*}
It is a $G_V$-stable, constructible subset of $\Lambda(V)$.

Let $C_i\in\Irrcomp(V^i)$, $i=1,2$ and take
\begin{equation} \label{def: S}
S=\argmin_{C_1\times C_2}\dim\Ext_{\Pi}^1(x_1,x_2).
\end{equation}
Then, $S$ is a nonempty open subset of $C_1\times C_2$ (hence irreducible) and $\Exts_2(S)$
is irreducible (\cite[Theorem 1.3(ii)]{MR1944812}).
We denote its Zariski closure by $C_1*C_2$.

Clearly,
\begin{equation} \label{eq: ***}
(C_1*C_2)^*=C_2^**C_1^*.
\end{equation}

\begin{theorem} \label{thm: app}
$C_1*C_2$ is an irreducible component of $\Lambda(V)$.
Moreover,
\begin{equation} \label{eq: densE}
\begin{gathered}
\text{if $\emptyset\ne S'\subset S$ is open and $G_{V^1}\times G_{V^2}$-stable,}\\
\text{then $\Exts_2(S')$ is dense in $C_1*C_2$.}
\end{gathered}
\end{equation}
\end{theorem}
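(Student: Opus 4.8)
The plan is to compute the dimension of the constructible set $\Exts_2(S)$, show it equals $\dim\Lambda(V)$, and conclude irreducible-component status from purity of dimension; the density statement then follows from the same dimension count applied to $S'$.

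First I would set up the incidence variety. Fix $V^1,V^2,V$ as above and consider the variety $Z$ of triples $(x_1,x_2,x)$ together with a short exact sequence $0\to x_2\to x\to x_1\to 0$, or more concretely the variety of pairs $(\phi,x)$ where $\phi$ realizes $V$ as an extension with a fixed flag $V^2\subset V$ of graded subspaces and $x\in R_\Pi(V)$ induces the prescribed structures $x_1$ on $V/V^2$ and $x_2$ on $V^2$ lying in $S$. One projects $Z$ to $S$ and also (after quotienting by the relevant group) to $\Exts_2(S)$. The fiber of the first projection over a point $(x_1,x_2)\in S$ is, up to the group acting on the flag, an affine space whose dimension is governed by the space of extensions: it has dimension $\dim\Ext^1_\Pi(x_1,x_2)$ plus contributions from choices of splitting/identification, precisely $\ext^1_\Pi(C_1,C_2)$ on the open set $S$ by definition of $S$ as the argmin locus. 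The key numerical input is Crawley-Boevey's formula \eqref{eq: CB} in the form rewritten just before the statement of the theorem, namely
\[
\dim\Ext_\Pi^1(M,N)=(\dim\Lambda(V)-\dim G_V\cdot(M\oplus N))-(\dim\Lambda(V^1)-\dim G_{V^1}\cdot M)-(\dim\Lambda(V^2)-\dim G_{V^2}\cdot N),
\]
which lets one trade the $\Ext^1$-dimension for differences of orbit dimensions and component dimensions.

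The main computation is then a bookkeeping of dimensions along the two projections from $Z$. On one side, $\dim Z = \dim S + (\text{fiber dimension})$, where the fiber dimension combines $\dim\Aut$ of the flag data with $\ext^1_\Pi(C_1,C_2)$. On the other side, $\dim Z = \dim\Exts_2(S) + (\text{generic fiber of }Z\to\Exts_2(S))$, and that generic fiber records the choices of a graded subobject $x_2\subset x$ isomorphic to a point of $C_2$ with quotient in $C_1$, i.e., a $G_V$-translate of a Grassmannian-type datum plus automorphisms. Using $\dim C_i=\dim R_Q(V^i)$ (purity), $\dim\Lambda(V)=\dim R_Q(V)$, the identity $\dim R_Q(V)=\dim R_Q(V^1)+\dim R_Q(V^2)+\sum_{h\in\Omega}(\bfd_1(h')\bfd_2(h'')+\bfd_2(h')\bfd_1(h''))$, and \eqref{eq: CB} to cancel the $\Ext^1$ term against the orbit-dimension differences, the two expressions for $\dim Z$ must agree, and solving yields $\dim\Exts_2(S)=\dim\Lambda(V)$. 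Since $\Exts_2(S)$ is irreducible and closed up to taking closure, its closure $C_1*C_2$ is an irreducible closed subvariety of $\Lambda(V)$ of dimension $\dim\Lambda(V)$; because $\Lambda(V)$ is of pure dimension $\dim R_Q(V)$, every such subvariety is an irreducible component. For the moreover-clause, one repeats the identical count with $S'$ in place of $S$: since $S'$ is open in the irreducible $S$ it has the same dimension, and the $\Ext^1$-dimension is still generically $\ext^1_\Pi(C_1,C_2)$ on $S'$, so $\dim\Exts_2(S')=\dim\Lambda(V)=\dim C_1*C_2$; as $\Exts_2(S')\subset\Exts_2(S)$ and the latter is irreducible, density follows.

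The hard part will be the precise fiber-dimension analysis of the projection $Z\to\Exts_2(S)$: one must argue that over a generic $x\in\Exts_2(S)$ the set of admissible graded submodules $x_2$ (with $x_2\in C_2$, $x/x_2\in C_1$) together with the gauge group forms a piece of the expected dimension, with no unexpected jumps. This is where one needs to be careful that $S$ being the argmin locus of $\Ext^1$ forces the generic behavior to be as uniform as possible, and where Crawley-Boevey's equality \eqref{eq: CB} does the real work in matching the count. I expect the cleanest route is to phrase everything as in \cite{MR1944812}*{Theorem 1.3}, whose construction already gives irreducibility of $\Exts_2(S)$, and to supply only the dimension equality that upgrades "irreducible subvariety" to "irreducible component" in the special situation of nilpotent varieties, using purity of $\Lambda(V)$ — a property not available for general module varieties, which is exactly why the component conclusion holds here.
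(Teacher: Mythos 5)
Your overall strategy coincides with the paper's: show that $\Exts_2(S')$ has dimension $\dim\Lambda(V)$ and invoke purity of $\Lambda(V)$ --- the feature special to nilpotent varieties --- to upgrade the irreducible subvariety of \cite{MR1944812} to a component; the density claim then comes for free since the count works for every open $S'$. The first half of your count is also exactly the paper's: the incidence variety $Z$ of module structures on $V=V^1\oplus V^2$ making $V^2$ a submodule with induced pair in $S'$ is, by \cite{MR1944812}, a vector bundle over $S'$, and \eqref{eq: CB} converts its rank into the codimension formula $\dim\Lambda(V)-\dim Z=\dim V^1\cdot\dim V^2-\hom_\Pi(C_2,C_1)$ (in the ungraded model, this is \eqref{eq: codim2}). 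The gap is precisely the step you flag as ``the hard part'': to extract $\dim\Exts_2(S')$ from $\dim Z$ you need the dimension of the \emph{generic} fiber of the saturation map $G\times Z\to G\cdot Z=\Exts_2(S')$, and that fiber is governed by the variety of admissible submodules of a generic extension --- a quiver-Grassmannian-type locus with no a priori reason to have the expected dimension $\hom_\Pi(C_2,C_1)$. Establishing that is essentially equivalent to the theorem itself, so as written the argument assumes what it needs to prove at exactly this point.

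The paper's resolution avoids the generic fiber altogether, and you could adopt it to close the gap. One acts on $Z$ only by the unipotent radical $R\cong\Lin(V^2,V^1)$ of the parabolic preserving $V^1$, and computes the fiber of the action map $R\times Z\to\overline{R\cdot Z}$ at the single \emph{split} point $x=x_1\oplus x_2$: since $R$-translates of $V^2$ are exactly graphs of linear maps $V^2\to V^1$ and conjugation by $R$ preserves the lower-left block, that fiber is precisely $\Hom_\Pi(x_2,x_1)\times\{x\}$, of dimension $\hom_\Pi(C_2,C_1)$. The fiber-dimension inequality (\cite{MR3100243}*{Theorem 1.25}) applied at this one point gives $\dim\overline{R\cdot Z}\ge\dim R+\dim Z-\hom_\Pi(C_2,C_1)=\dim\Lambda(V)$, and purity gives the reverse inequality; since $R\cdot Z\subset\Exts_2(S')$, both assertions follow. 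Note that upper semicontinuity of fiber dimension would indeed let you bound the generic fiber by the fiber at one computable point, as your outline implicitly hopes --- but for the full group action even the split-point fiber involves \emph{all} admissible submodules of $x_1\oplus x_2$, not just graphs of homomorphisms, which is why restricting to $R$ is the decisive simplification.
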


\begin{proof}
It will be more convenient to ``forget'' about the grading of $V$.
Thus, instead of $R_\Pi(V)$ we consider the larger variety $\tilde R_\Pi(V)$ of $\Pi$-module structure on $V$
(i.e., algebra homomorphisms $\Pi\rightarrow\Lin(V)$)
such that for all $i\in I$, $e_i$ acts as a projection onto a $\bfd(i)$-dimensional, but otherwise arbitrary, subspace of $V$.
Fixing a generating set of $\Pi$ of size $g$, we can identify $\tilde R_\Pi(V)$ with a closed subvariety of $\Lin(V)^g$
which is stable under $\GL(V)$ (acting diagonally by conjugation).
Let $\tilde\Lambda(V)\subset\tilde R_\Pi(V)$ be the closed, connected, $\GL(V)$-stable subvariety of nilpotent modules.
Clearly,\footnote{If $H$ is a subgroup of $G$ and $X$ is an $H$-set, then we write $G\overset{H}{\times}X$ for the $G$-set
$(G\times X)/H$ where $H$ acts freely on $G\times X$ by $(g,x)h=(gh,h^{-1}x)$.}
\[
\tilde R_\Pi(V)=\GL(V)\overset{G_V}{\times}R_\Pi(V)\text{ and }\tilde\Lambda(V)=\GL(V)\overset{G_V}{\times}\Lambda(V).
\]
Thus, $\tilde\Lambda(V)$ is of pure dimension $\delta(\bfd)=\dim\Lambda(V)+\dim\GL(V)-\dim G_V$
and the irreducible components of $\tilde\Lambda(V)$ and $\Lambda(V)$ are in natural bijection.
Note that by \eqref{eq: (dd)},
\begin{equation} \label{eq: (ddd)}
(\bfd,\bfd)=2(\dim\GL(V)-\delta(\bfd)).
\end{equation}
Let $\tilde C_i=\GL(V^i)\cdot C_i$ be the irreducible component of $\tilde\Lambda(V^i)$ corresponding to $C_i$, $i=1,2$.
Write
\[
\tilde S=(\GL(V^1)\times\GL(V^2))\cdot S\subset\tilde C_1\times\tilde C_2,
\ \ \widetilde{S'}=(\GL(V^1)\times\GL(V^2))\cdot S'\subset\tilde S
\]
and $\tilde\Exts=\tilde\Exts_2(\widetilde{S'})=\GL(V)\cdot\Exts_2(S')$.
It is enough to show that the dimension of $\tilde\Exts$ is $\delta(\bfd)$.

For simplicity, take $V=V^1\oplus V^2$.
Let $Z$ be the subset of $\tilde R_\Pi(V)$ (and in fact, of $\tilde\Lambda(V)$)
consisting of the module structures on $V$ for which
$V^2$ is a submodule and the pair of induced structures on $V^1=V/V^2$ and $V^2$ is an element of $\widetilde{S'}$.
(With respect to the decomposition $V=V^1\oplus V^2$, any element of $Z$, viewed in $\Lin(V)^g$,
is block lower triangular in each coordinate.)
Thus, $\tilde\Exts=\GL(V)\cdot Z$.

As explained in \cite[\S4 and \S5]{MR1944812}, the dimensions of
$\Ext^1_\Pi(x_1,x_2)$ and $\Hom_\Pi(x_1,x_2)$ are constant (say $N_1$ and $N_0$) on $\tilde S$
and via the canonical map $p:Z\rightarrow\widetilde{S'}$, $Z$ becomes a vector bundle over $\widetilde{S'}$ of rank $N_1-N_0+\dim V^1\cdot\dim V^2$.
In particular, $Z$ is irreducible of dimension
\[
\dim\widetilde{S'}+N_1-N_0+\dim V^1\cdot\dim V^2=\delta(\bfd_1)+\delta(\bfd_2)+N_1-N_0+\dim V^1\cdot\dim V^2.
\]
In the case at hand, we have (by \eqref{eq: CB} and \eqref{eq: (ddd)})
\begin{equation} \label{eq: dimext1}
\begin{aligned}
\dim\Ext^1_\Pi(x_1,x_2)=\delta(\bfd)-\delta(\bfd_1)-\delta(\bfd_2)
-2\dim V^1\cdot\dim V^2+\\\dim\Hom_\Pi(x_1,x_2)+\dim\Hom_\Pi(x_2,x_1).
\end{aligned}
\end{equation}
It follows that $\tilde S\subset\argmin_{C_1\times C_2}\Hom_\Pi(x_2,x_1)$ and
\begin{equation} \label{eq: codim2}
\delta(\bfd)-\dim Z=\dim V^1\cdot\dim V^2-\hom_\Pi(C_2,C_1).
\end{equation}

Let $R$ be the unipotent radical of the parabolic subgroup of $\GL(V)$ preserving $V^1$.
Let $Y$ be the Zariski closure of $R\cdot Z$. Clearly, $Y$ is irreducible.
We show that $\dim Y=\delta(\bfd)$, which will clearly imply the theorem.
Let $X=R\times Z$ and consider the action map
\[
f:X\rightarrow Y.
\]
Identify $R$ with $\Lin(V^2,V^1)$ via $T\mapsto\sm 1T{}1$.
We claim that for any $(x_1,x_2)\in\widetilde{S'}$, the fiber $X_x$ of $f$ at $x=x_1\oplus x_2\in Z$ is $\Hom_\Pi(x_2,x_1)\times\{x\}$.
Indeed, the lower left corner is preserved under the action of $R$.
Therefore, if $T\cdot z=x$ with $T\in R$ and $z\in Z$, then $z$ is diagonal, which clearly implies that $z=x$
and $T\in\Hom_\Pi(x_2,x_1)$.

In particular, by \eqref{eq: codim2}
\[
\dim X_x=\dim R+\dim Z-\delta(\bfd)=\dim X-\delta(\bfd).
\]
It follows from \cite[Theorem 1.25]{MR3100243} that
\[
\dim Y\ge\dim X-\dim X_x=\delta(\bfd).
\]
Since evidently, $\dim Y\le\delta(\bfd)$, we conclude that $\dim Y=\delta(\bfd)$, as required.
\end{proof}

The theorem gives rise to a binary operation (also denoted by $*$) on $\Irrcomp$ such that
$\Irrcomp(\bfd_1)*\Irrcomp(\bfd_2)\subset\Irrcomp(\bfd_1+\bfd_2)$.

\begin{remark} \label{rem: BKT}
In the case where $C_1$ (or $C_2$) is unicolor,
Theorem \ref{thm: app}, together with Proposition \ref{prop: cancel} below, was proved by Lusztig (see \cite[\S12]{MR1088333}).
It was conjectured by him \cite{MR1182165} that this gives rise to the crystal graph defined by Kashiwara \cite{MR1115118}.
This was subsequently proved by Kashiwara and Saito \cite{MR1458969}.

A broader situation, from which the previous case can be deduced, was considered by Baumann--Kamnitzer--Tingley in \cite{MR3270589}.
It is pertaining to cases where $\hom_\Pi(C_2,C_1)=0$.
More precisely, let $(\tors,\torf)$ be a torsion pair (see [ibid., \S3.1]) in the category of nilpotent $\Pi$-modules.
Assume that $\tors(\bfd)=\Lambda(\bfd)\cap\tors$ and $\torf(\bfd)=\Lambda(\bfd)\cap\torf$ are open in $\Lambda(\bfd)$
for all $\bfd\in\N I$.
Let $\Tors(\bfd)$ (resp., $\Torf(\bfd)$) be the irreducible components of $\tors(\bfd)$ (resp., $\torf(\bfd)$), viewed
as subsets of $\Irrcomp(\bfd)$ (by taking the Zariski closure). Let
\[
\Tors=\bigcup_{\bfd}\Tors(\bfd),\ \Torf=\bigcup_{\bfd}\Torf(\bfd).
\]
Then, $(C_1,C_2)\mapsto C_1*C_2$ defines a bijection $\Torf\times\Tors\rightarrow\Irrcomp$ ([ibid., Theorem 4.4]).\footnote{Note that
the convention for the product in [ibid.] is opposite to ours.}
Many important examples of such torsion pairs are given in [ibid., \S5].
\end{remark}

We say that $C_1$ and $C_2$ strongly commute if there exist $x_i\in C_i$, $i=1,2$
such that $\Ext^1_{\Pi}(x_1,x_2)=0$.
(This is an open condition in $(x_1,x_2)\in\Lambda(V^1)\times\Lambda(V^2)$.)
We caution that in general, an irreducible component does not necessarily strongly commute with itself.

Let $V=V^1\oplus V^2$ and denote by $C_1\oplus C_2$ the Zariski closure of the set
\[
G_V\cdot\{x_1\oplus x_2\mid x_1\in C_1\text{ and }x_2\in C_2\}.
\]
This is an irreducible subset of $\Lambda(V)$. Clearly,
\begin{equation} \label{eq: contsimp}
C_1*C_2\supseteq C_1\oplus C_2.
\end{equation}

\begin{corollary}[cf.\ \cite{MR1944812}] \label{cor: sc}
$C_1$ and $C_2$ strongly commute if and only if $C_1\oplus C_2$ is an irreducible component of $\Lambda(V)$,
in which case $C_1\oplus C_2=C_1*C_2$.
\end{corollary}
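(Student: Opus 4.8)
The plan is to deduce the whole statement from one dimension identity. By \eqref{eq: contsimp} we have $C_1\oplus C_2\subseteq C_1*C_2$, and by Theorem~\ref{thm: app} the latter is an irreducible component of $\Lambda(V)$, hence of dimension $\dim\Lambda(V)$. Since $C_1\oplus C_2$ is closed and irreducible, it is an irreducible component of $\Lambda(V)$ precisely when it equals $C_1*C_2$, and (both being irreducible with $C_1\oplus C_2\subseteq C_1*C_2$) this happens precisely when $\dim(C_1\oplus C_2)=\dim\Lambda(V)$; this also gives the last clause. On the other hand, by definition and the upper semicontinuity of $\dim\Ext^1_\Pi$, the components $C_1$ and $C_2$ strongly commute if and only if $\ext^1_\Pi(C_1,C_2)=0$. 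So it is enough to prove
\[
\dim\Lambda(V)-\dim(C_1\oplus C_2)=\ext^1_\Pi(C_1,C_2).
\]

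To compute $\dim(C_1\oplus C_2)$ I would analyze the action morphism
\[
\phi\colon G_V\times C_1\times C_2\to\Lambda(V),\qquad (g,x_1,x_2)\mapsto g\cdot(x_1\oplus x_2),
\]
whose image has Zariski closure $C_1\oplus C_2$. Let $U\subseteq C_1\times C_2$ be the nonempty dense open subset on which $\dim\Hom_\Pi(x_1,x_2)$, $\dim\Hom_\Pi(x_2,x_1)$, $\dim\End_\Pi(x_1)$ and $\dim\End_\Pi(x_2)$ are all minimal. Fix $(x_1,x_2)\in U$ and put $p=x_1\oplus x_2$. The projection of $\phi^{-1}(p)$ to $C_1\times C_2$ has image $F=\{(y_1,y_2)\in C_1\times C_2\mid y_1\oplus y_2\simeq p\}$, and all its fibers have dimension $\dim\End_\Pi(p)$ (they are cosets of $\Aut_\Pi(y_1\oplus y_2)$); by the Krull--Schmidt theorem $F$ is a finite union of $G_{V^1}\times G_{V^2}$-orbits, and the one through $(x_1,x_2)$ has maximal dimension among them because $\dim\End_\Pi(x_i)$ is minimal over $C_i$. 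Hence $\dim\phi^{-1}(p)=\dim G_{V^1}+\dim G_{V^2}+\dim\Hom_\Pi(x_1,x_2)+\dim\Hom_\Pi(x_2,x_1)$, and since $\phi(G_V\times U)$ is dense in $C_1\oplus C_2$ this is the generic fiber dimension of $\phi$. The fiber dimension theorem (as in \cite{MR3100243}*{Theorem~1.25}) then yields
\[
\dim(C_1\oplus C_2)=\dim G_V+\dim C_1+\dim C_2-\dim G_{V^1}-\dim G_{V^2}-\hom_\Pi(C_1,C_2)-\hom_\Pi(C_2,C_1).
\]

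Finally I would substitute $\dim C_i=\dim\Lambda(V^i)=\dim G_{V^i}-\tfrac12(\bfd_i,\bfd_i)$ and $\dim\Lambda(V)=\dim G_V-\tfrac12(\bfd,\bfd)$ from \eqref{eq: (dd)}, expand $(\bfd,\bfd)=(\bfd_1,\bfd_1)+2(\bfd_1,\bfd_2)+(\bfd_2,\bfd_2)$ by bi-additivity, and invoke the identity $\hom_\Pi(C_1,C_2)-\ext^1_\Pi(C_1,C_2)+\hom_\Pi(C_2,C_1)=(\bfd_1,\bfd_2)$, which follows from \eqref{eq: CB}. This collapses to
\[
\dim\Lambda(V)-\dim(C_1\oplus C_2)=\hom_\Pi(C_1,C_2)+\hom_\Pi(C_2,C_1)-(\bfd_1,\bfd_2)=\ext^1_\Pi(C_1,C_2),
\]
as required.

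The step I expect to be the main obstacle is the fiber computation: one must check that $F$ is genuinely a finite union of orbits with the orbit through $(x_1,x_2)$ of largest dimension (this is exactly where minimality of $\dim\End_\Pi(x_i)$ over $C_i$ is used), and one must line up several dense open subsets of the irreducible variety $C_1\times C_2$ so that all the relevant generic values of $\Hom_\Pi$, $\End_\Pi$ (and, via \eqref{eq: CB}, $\Ext^1_\Pi$) are attained at one point. I would also record a softer argument for the ``only if'' direction that avoids dimensions: if $C_1$ and $C_2$ strongly commute then $\ext^1_\Pi(C_1,C_2)=0$, so every extension parametrized by the open set $S$ of \eqref{def: S} splits, whence $\Exts_2(S)\subseteq C_1\oplus C_2$; since $\Exts_2(S)$ is dense in $C_1*C_2$ by Theorem~\ref{thm: app}, we get $C_1*C_2\subseteq C_1\oplus C_2$, and with \eqref{eq: contsimp} equality follows. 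The converse, however, seems to genuinely require the displayed identity.
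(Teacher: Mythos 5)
Your proof is correct, but it takes a genuinely different route from the paper's, which disposes of the corollary in two sentences: the forward implication is exactly your ``softer argument'' at the end ($\ext^1_\Pi(C_1,C_2)=0$ forces every extension in $\Exts_2(S)$ to split, so $C_1*C_2\subseteq C_1\oplus C_2$, and \eqref{eq: contsimp} gives equality), while the converse is obtained by citing the argument of \cite{MR1944812}*{p. 216}. What you do instead is reprove that cited result in the preprojective setting: your fiber-dimension analysis of the action map $G_V\times C_1\times C_2\to\Lambda(V)$, combined with \eqref{eq: (dd)}, \eqref{eq: CB} and the pure-dimensionality packaged in Theorem \ref{thm: app}, yields the identity $\dim\Lambda(V)-\dim(C_1\oplus C_2)=\ext^1_\Pi(C_1,C_2)$, from which both implications and the final clause follow at once. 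This buys self-containedness and a quantitative refinement (the codimension of $C_1\oplus C_2$ in $\Lambda(V)$ is exactly $\ext^1_\Pi(C_1,C_2)$); note that the one-sided form of your criterion is special to $\Pi$, via the symmetry \eqref{eq: extdalty}, whereas the general Crawley-Boevey--Schr\"oer criterion requires the vanishing of both $\ext^1(C_1,C_2)$ and $\ext^1(C_2,C_1)$. Two small points to tighten: (i) to promote the dimension of the fiber over the single point $p=x_1\oplus x_2$ to the generic fiber dimension, observe that the identical computation applies at every point of $\phi(G_V\times U)$, a dense constructible subset of $C_1\oplus C_2$ which therefore meets the open dense locus where the fiber dimension equals $\dim(G_V\times C_1\times C_2)-\dim(C_1\oplus C_2)$; (ii) the Euler-form identity you invoke requires the three minima to be attained at a common point, which holds because the three minimal loci are open dense in the irreducible variety $C_1\times C_2$ (the paper records this identity explicitly right after defining $\hom_\Pi$ and $\ext^1_\Pi$, so you may simply quote it).
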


Indeed, if $C_1$ and $C_2$ strongly commute then $\Exts_2(S)\subset C_1\oplus C_2$ and therefore
$C_1\oplus C_2=C_1*C_2$ is an irreducible component.
Conversely, if $C_1\oplus C_2$ is an irreducible component, then the argument of \cite[p. 216]{MR1944812}
shows that $C_1$ and $C_2$ strongly commute.

\begin{remark}
Corollary \ref{cor: sc} is a special case of a general result of Crawley-Boevey and Schr\"oer.
Namely, let $A$ be a finitely generated algebra and for any finite-dimensional vector space $V$
let $R_A(V)$ be the variety of $A$-module structures on $V$.
Let $V=V^1\oplus V^2$ and let $C_i$ be irreducible components of $R_A(V^i)$, $i=1,2$.
Define $C_1\oplus C_2$ as above.
Then, by \cite{MR1944812}, $C_1\oplus C_2$ is an irreducible component of $R_A(V)$ if and only
if there exist $x_i\in C_i$, $i=1,2$ such that $\Ext_A^1(x_1,x_2)=\Ext_A^1(x_2,x_1)=0$ .
(When $A$ admits orthogonal idempotents $e_i$, $i\in I$ such that $\sum e_i=1$, we immediately deduce a version with graded dimensions.)
To obtain Corollary \ref{cor: sc} we take $A$ to be the quotient of $\Pi$ by a suitable power of the augmentation
ideal. (In the Dynkin case, $A$ is $\Pi$ itself.)
Of course, in the preprojective case, the condition $\Ext^1_{\Pi}(x_1,x_2)=0$ is symmetric by \eqref{eq: extdalty}.
\end{remark}

\section{Rigid modules} \label{sec: rigid}

Recall that by definition, a $\Pi$-module $x$ is rigid if $\Ext_\Pi^1(x,x)=0$.

Clearly, $x$ is rigid if and only if $x^*$ is rigid.

If $x=x_1\oplus\dots\oplus x_k$, then
\begin{equation} \label{eq: x12r}
x\text{ is rigid}\iff \Ext_\Pi^1(x_i,x_j)=0\text{ for all }i,j.
\end{equation}

For instance, if the graded dimension of $x$ is concentrated at $i\in I$, then $x$ is rigid.
Also, every projective module is rigid.

Let $V$ be a finite-dimensional $I$-graded vector space.
Let ${\bf R}_{\Pi}(V)$ be the \emph{scheme} of $\Pi$-modules on $V$
such that $e_iV=V_i$ for all $i\in I$.
By Voigt's lemma (which is valid for any finitely generated algebra --
see \cite[\S1.1]{MR0376769} for a standard reference)
for any $x\in R_\Pi(V)$ with $G_V$-orbit $\orb(x)$ we have
\begin{equation} \label{eq: Voigt}
\Ext_\Pi^1(x,x)\simeq N_x^{{\bf R}_{\Pi}(V)}(\orb(x))=T_x{\bf R}_\Pi(V)/T_x\orb(x)
\end{equation}
where $T_x{\bf R}_\Pi(V)$ is the Zariski tangent space of the scheme ${\bf R}_\Pi(V)$ at $x$.
In particular, $x$ is rigid if and only if $\orb(x)$ is an open subscheme of ${\bf R}_{\Pi}(V)$.
(Of course, we can forgo the condition $e_iV=V_i$ and consider the $\GL(V)$-orbit instead.)

On the other hand, by \eqref{eq: CB2}, $x$ is rigid if and only if
\[
2\dim\End_\Pi(x)=(\bfd,\bfd).
\]
Since
\[
\dim\orb(x)=\dim G_V-\dim\End_\Pi(x),
\]
we also have
\[
\dim\Ext_\Pi^1(x,x)=2(\dim\Lambda(V)-\dim\orb(x))
\]
(by \eqref{eq: (dd)}). Comparing with \eqref{eq: Voigt} we get
\[
\tfrac12\dim\Ext_\Pi^1(x,x)=\dim\Lambda(V)-\dim\orb(x)=\dim T_x{\bf R}_\Pi(V)-\dim\Lambda(V).
\]
In particular, if $x\in\Lambda(V)$ then
\[
\dim\Ext_\Pi^1(x,x)=2\codim\orb(x)\ \ \ \text{ (codimension in $\Lambda(V)$).}
\]
Therefore, the following conditions are equivalent for $x\in\Lambda(V)$.
\begin{enumerate}
\item $x$ is rigid.
\item $\orb(x)$ is open in $\Lambda(V)$.
\item The Zariski closure $\overline{\orb(x)}$ is an irreducible component of $\Lambda(V)$.
\item $\dim\End_\Pi(x)=\dim G_V-\dim\Lambda(V)$.
\item $\dim\End_\Pi(x)\le\dim G_V-\dim\Lambda(V)$.
\item $x$ is a smooth point in the scheme ${\bf R}_\Pi(V)$ (in particular, it lies in a unique irreducible
component $C$ of $R_\Pi(V)$) and $\dim C=\dim\Lambda(V)$.\footnote{The condition $\dim C=\dim\Lambda(V)$
is redundant in the case $Q$ is of Dynkin type.}
\end{enumerate}

\begin{definition} \label{def: irrigid}
An irreducible component $C$ of $\Lambda(V)$ is called rigid if it contains a rigid module, or equivalently
it contains a (necessarily unique) open $G_V$-orbit.
\end{definition}
In this case, the open orbit in $C$ consists of the rigid modules in $C$.

For instance, any unicolor $C\in\Irrcomp$ is rigid.

If $C$ is rigid, then $C^*$ is rigid.

\begin{remark}
Suppose that $Q$ is of Dynkin type.
Then, an irreducible component $C$ of $\Lambda(V)$ is rigid if and only if
the scheme ${\bf R}_\Pi(V)$ is generically smooth (or equivalently, generically reduced) at $C$.\footnote{The latter
equivalence holds for any irreducible component of a scheme of finite type over $K$.}

If $C=\prm_Q(\m)$, then $C$ is rigid if and only if $\cn_Q(\m)$ contains an open $G_V$-orbit.
In this case, $\prm_{Q^{\op}}(\m)=\prm_Q(\m)^*$ is also rigid.

For any $V\in\Vgr$, $R_Q(V)\times\{0\}$ and $\{0\}\times R_{Q^{\op}}(V)$ are rigid.
In particular, $\prm_Q(\beta)$ is rigid for any $\beta\in\Psi$.
\end{remark}

In general, we have a bijection between the set of rigid irreducible components of $\Lambda(V)$
and the set of $G_V$-orbits of rigid modules in $\Lambda(V)$.

The first examples of non-rigid irreducible components were given by Leclerc in \cite{MR1959765}.
(We will recall it in the second part of the paper.)

The role of rigid modules and irreducible components was highlighted in the work
of Geiss, Leclerc and Schr\"oer (e.g., \cite{MR2242628} and also below).

\begin{remark} \label{rem: CCrigid}
Let $C_1,C_2,C\in\Irrcomp$.
\begin{enumerate}
\item By \eqref{eq: x12r}, if $C_1$ and $C_2$ strongly commute, then $C_1*C_2=C_1\oplus C_2$ is rigid
if and only if $C_1$ and $C_2$ are rigid.
\item If $C$ is rigid, then $C$ strongly commutes with itself. The converse, however is not true as Leclerc's example shows.
\item It is possible for $C*C$ to be rigid even if $C$ itself is not. We will give an example in the second part of the paper.
\end{enumerate}
\end{remark}

\section{Cancellation} \label{sec: cancel}
We show that rigid irreducible components are cancellative.
(We do not know whether non-rigid irreducible components are cancellative as well.)

Fix a rigid $C_1\in\Irrcomp$ and a rigid element $x_1$ in $C_1$.
For any $V\in\Vgr(\bfd)$ and an irreducible component $C\in\Irrcomp(V)$, denote by $C'$ the (possibly empty) constructible, $G_V$-stable
subset of $C$ consisting of the elements $x$ that admit $x_1$ as a quotient of $\Pi$-modules.
Denote by $\Irrcomp_{C_1\cvr}(V)$ or $\Irrcomp_{C_1\cvr}(\bfd)$ the set of irreducible components $C\in\Irrcomp(V)$
for which $C'$ is dense in $C$.
Finally, set
\[
\Irrcomp_{C_1\cvr}=\cup_{\bfd\in\N I}\Irrcomp_{C_1\cvr}(\bfd)\subset\Irrcomp.
\]
The following proposition generalizes a result of Lusztig in the case where $C_1$ is unicolor.

\begin{proposition} \label{prop: cancel}
The map $C_2\in\Irrcomp\mapsto C_1*C_2\in\Irrcomp$ defines a bijection $\Irrcomp\rightarrow\Irrcomp_{C_1\cvr}$.
Its inverse takes $C\in\Irrcomp_{C_1\cvr}(V)$ to the closure of the set of $\Pi$-modules
$\{\Ker\phi\mid x\in S',\phi:x\twoheadrightarrow x_1\}$,
for any open, nonempty $G_V$-stable subset $S'$ of
\[
S=\{x\in C'\mid\dim\Hom_\Pi(x,x_1)=\hom_\Pi(C,C_1)\}.
\]

Similarly, $C_2\mapsto C_2*C_1$ is one-to-one and its image consists of the set of irreducible components $C$
for which the subset $C''$ consisting of the $x$'s that admit $x_1$ as a submodule is dense in $C$.
The inverse map is given by taking the closure of the set of $\Pi$-modules $\{\Coker\phi\mid x\in S',\phi:x_1\hookrightarrow x\}$
for any open, nonempty $G_V$-stable subset $S'$ of $\{x\in C''\mid\dim\Hom_\Pi(x_1,x)=\hom_\Pi(C_1,C)\}$.
\end{proposition}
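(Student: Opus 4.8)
The plan is to treat the map $C_2 \mapsto C_1 * C_2$ and exhibit the claimed inverse directly, showing the two constructions are mutually inverse bijections. The key tool is Theorem \ref{thm: app}, which says $C_1 * C_2$ is an irreducible component and that $\Exts_2(S')$ is dense in it for any nonempty open $G_{V^1}\times G_{V^2}$-stable $S' \subset S$, where $S = \argmin_{C_1 \times C_2}\dim\Ext^1_\Pi$. First I would record that $C_1 * C_2 \in \Irrcomp_{C_1\cvr}$: a generic point $x$ of $C_1 * C_2$ lies in $\Exts_2(S')$ for suitable $S'$, hence fits into a short exact sequence $0 \to x_2 \to x \to x_1' \to 0$ with $x_1' \in C_1$; since $x_1 = \mdl(C_1)$ is the rigid module and its orbit is open in $C_1$, a further shrinking of $S'$ arranges $x_1' \cong x_1$, so $x$ surjects onto $x_1$. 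Thus the image of $C_2 \mapsto C_1 * C_2$ lands in $\Irrcomp_{C_1\cvr}$.

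Next I would analyze the proposed inverse. Given $C \in \Irrcomp_{C_1\cvr}(V)$, the set $C' = \{x \in C \mid \exists\, x \twoheadrightarrow x_1\}$ is dense in $C$ by hypothesis, and $\argmin_C \dim\Hom_\Pi(\cdot, x_1)$ is open and dense (upper semicontinuity), so $S = C' \cap \argmin_C\dim\Hom_\Pi(\cdot,x_1)$ is nonempty; it is also $G_V$-stable since $x_1$ is fixed up to isomorphism by the $G_V$-action on $C$. For $x \in S$ and a surjection $\phi : x \twoheadrightarrow x_1$, the kernel $\Ker\phi$ is a nilpotent $\Pi$-module of graded dimension $\bfd - \bfd_1$. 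The heart of the matter is to show that, as $x$ ranges over a suitable nonempty open $G_V$-stable $S' \subset S$ and $\phi$ over the surjections, the modules $\Ker\phi$ sweep out a dense constructible subset of a single irreducible component $C_2$, and that $C = C_1 * C_2$. For the first point I would mimic the vector-bundle argument in the proof of Theorem \ref{thm: app} (following \cite{MR1944812}*{\S4--\S5}): over the locus where $\dim\Hom_\Pi(x, x_1)$ and $\dim\Ext^1_\Pi$ are constant, the incidence variety of pairs $(x, \phi)$ with $\phi$ surjective is a bundle over $S'$, hence irreducible, and the assignment $(x,\phi) \mapsto \Ker\phi$ is a morphism to $\Lambda(V^2)$ whose image is therefore irreducible; let $C_2$ be the closure of that image (an irreducible subvariety — I must still check it is a full component). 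Conversely, running $\Exts_2$ on (an open part of) $C_1 \times C_2$ recovers a dense subset of $C$, because a generic extension of $x_1$ by a generic kernel $x_2 = \Ker\phi$ deforms back into $C$; here one uses that $\dim C = \delta$-count matches, i.e. the codimension bookkeeping from \eqref{eq: CB} and \eqref{eq: dimext1} forces $C_2$ to have the right dimension and $C_1 * C_2$ to equal $C$. This simultaneously shows $C_2 \in \Irrcomp$ (full component) and that the two maps compose to the identity in both directions, giving the bijection onto $\Irrcomp_{C_1\cvr}$.

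Finally, the ``similarly'' clause for $C_2 \mapsto C_2 * C_1$ follows by applying the self-duality $M \mapsto M^*$ of \S\ref{sec: prep}: by \eqref{eq: ***} we have $(C_2 * C_1) = (C_1^* * C_2^*)^*$, and $C_1^* = \mdl(C_1)^*$ is again rigid; a surjection $x^* \twoheadrightarrow x_1^*$ dualizes to an injection $x_1 \hookrightarrow x$, and $\Ker(x^* \to x_1^*) = (\Coker(x_1 \to x))^*$, so the whole first part transports verbatim. I would present this transport in a sentence or two rather than re-running the argument.

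I expect the main obstacle to be showing that the irreducible subvariety $C_2$ produced by the kernel construction is a genuine \emph{irreducible component} of $\Lambda(V^2)$ — equivalently, that it has dimension $\dim\Lambda(V^2)$ — and, tied to this, that $C_1 * C_2$ recovers all of $C$ rather than a proper closed subset. The resolution should be a dimension count: the bundle structure gives $\dim$ of the incidence variety in terms of $\dim C$, $\hom_\Pi$ and $\ext^1_\Pi$ values; feeding this through Crawley-Boevey's formula \eqref{eq: CB} and the purity of $\Lambda$ (each of $\Lambda(V)$, $\Lambda(V^1)$, $\Lambda(V^2)$ has the dimension dictated by \eqref{eq: (dd)}) pins down $\dim C_2 = \dim\Lambda(V^2)$ exactly, with equality forced throughout. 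The rigidity of $C_1$ enters crucially here: it makes the fiber $\Hom_\Pi(x_1, \cdot)$ / $\Hom_\Pi(\cdot, x_1)$ behavior rigid enough that no dimension is lost, which is precisely why the statement is restricted to rigid $C_1$ and why cancellativity for non-rigid components is left open.
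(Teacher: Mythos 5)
Your overall strategy is the paper's: show $C_1*C_2\in\Irrcomp_{C_1\cvr}$ using that the open orbit of $\mdl(C_1)$ dominates $C_1$; then, for $C\in\Irrcomp_{C_1\cvr}$, form the incidence variety of pairs $(x,\phi)$ with $\phi:x\twoheadrightarrow x_1$, prove it is irreducible via a bundle structure over $S'$ (the paper normalizes $\Ker\phi$ to a fixed subspace via a Grassmannian pullback, which is the clean way to make ``$\Ker\phi$'' a morphism to a fixed $\Lambda(V_2)$), and then pin down the dimension of the image by computing fiber dimensions and feeding the result through \eqref{eq: CB}/\eqref{eq: dimext1} and the purity of the nilpotent varieties. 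You also correctly locate where rigidity of $x_1$ enters: in the paper it is used to get the exact sequence $0\to\Hom_\Pi(y_1,x_1)\to\Hom_\Pi(x,x_1)\to\Hom_\Pi(y_2,x_1)\to 0$, which makes the fiber dimension of $(x,\phi)\mapsto(x/\Ker\phi,\Ker\phi)$ constant. The duality reduction for the second clause is exactly what the paper does.

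There is one genuine gap: injectivity of $C_2\mapsto C_1*C_2$ does not come ``simultaneously'' with the rest. What your argument establishes is that the kernel construction $\kappa$ is well defined and that $C_1*\kappa(C)=C$ for every $C\in\Irrcomp_{C_1\cvr}$; this gives surjectivity of $*$ onto $\Irrcomp_{C_1\cvr}$ and injectivity of $\kappa$, but a one-sided inverse does not force $*$ to be injective (a priori two different $C_2,C_2'$ could have $C_1*C_2=C_1*C_2'=C$ with $\kappa(C)$ equal to only one of them). One must separately prove $\kappa(C_1*C_2')=C_2'$ for every $C_2'$. The paper devotes its final paragraph to exactly this: given $C=C_1*C_2'$, it uses the density statement \eqref{eq: densE} of Theorem \ref{thm: app} to show that the modules in $C$ built as extensions of $x_1$ by elements of a dense open subset of $C_2'$ meet $S'$, so the kernels produced by $\kappa$ sweep out a dense open subset of $C_2'$, forcing $\kappa(C)=C_2'$. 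This step is short but cannot be omitted.
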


\begin{proof}
We only need to prove the first statement, since the second one can then be deduced by passing to $C^*$.

Clearly, $C_1*C_2\in\Irrcomp_{C_1\cvr}$ for any $C_2\in\Irrcomp$.
Conversely, suppose that $C\in\Irrcomp_{C_1\cvr}(V)$ with $\grdim V=\bfd$.
Write $\bfd=\bfd_1+\bfd_2$ where $C_1\in\Irrcomp(\bfd_1)$.
Let $d$ be the total dimension of $\bfd$. Similarly for $d_1$ and $d_2$.

As in the proof of Theorem \ref{thm: app} we prove an analogous (but equivalent) statement for
$\tilde\Lambda(V)=\GL(V)\overset{G_V}{\times}\Lambda(V)$.
Thus, we replace $C$, $S$ and $S'$ by their image under the action of $\GL(V)$
(so that $C$ is an irreducible component of $\tilde\Lambda(V)$) and similarly for $C_1$.
Fix a $d_2$-dimensional subspace $V_2\subset V$ and let $V_1=V/V_2$.
We view $C_1$ as an irreducible component of $\tilde\Lambda(V_1)$.

Let
\[
X=\{(x,\varphi)\mid x\in S',\varphi\in\Hom_\Pi(x,x_1),\Ker\varphi=V_2\},
\]
and consider the morphism
\[
f:X\rightarrow\tilde\Lambda(V_1)\times\tilde\Lambda(V_2)
\]
that takes $(x,\varphi)\in X$ to the quotient and submodule structures induced by $x$ on $V_1$ and $V_2$.

Let $Y$ be the closure of the image of $f$.
We claim that $Y=C_1\times C_2$ where $C_2$ is the unique irreducible component of $\tilde\Lambda(V_2)$ such that $C=C_1*C_2$.

We first show that $X$ is irreducible of dimension $\delta(\bfd)+\hom_\Pi(C,C_1)-d_1d_2$
where $\delta(\bfd)=\dim\tilde\Lambda(V)$.
Let
\[
C^\circ=\argmin_{x\in C}\dim\Hom_\Pi(x,x_1),
\]
which is a nonempty open subset of $C$. Consider
\[
E=\{(x,\varphi)\mid x\in C^\circ,\varphi\in\Hom_\Pi(x,x_1)\}.
\]
Thus, $E$ is the fiberwise kernel of the morphism
\[
\xi:C^\circ\times\Lin(V,V_1)\rightarrow C^\circ\times\Lin(V,V_1)^N
\]
of trivial vector bundles over $C^\circ$ given by
\[
\xi(x,\varphi)=(x,\big(\varphi\circ x(a_i)-x_1(a_i)\circ\varphi\big)_{i=1}^N)
\]
where $a_1,\dots,a_N$ are fixed generators of $\Pi$.
Since $\xi$ is of constant rank (by the definition of $C^\circ$), $E$ is a vector bundle over $C^\circ$ by a standard result
(\cite[Proposition 1.7.2]{MR1428426}).\footnote{We thank Yakov Varshavsky for providing us this reference.}

We may consider $V$ and $V_1$ as constant sheaves of vector spaces $\mathcal{F}$ and $\mathcal{G}$
over $E$. The map
\[
\Phi:\mathcal{F}\rightarrow\mathcal{G}
\]
given by $\varphi$ is a morphism of sheaves over $E$. Let $\mathcal{B}=\Coker\Phi$.
Note that the fiber $\mathcal{B}_{(x,\varphi)}$ is $\Coker\varphi$.
Since $C\in\Irrcomp_{C_1\cvr}$, the complement $E^\circ$ of the support of $\mathcal{B}$ in $E$ is nonempty
(and open). Note that
\[
E^\circ=\{(x,\varphi)\mid x\in C^\circ,\varphi\in\Hom_\Pi(x,x_1),\varphi\text{ is surjective}\}.
\]
Thus, $S$, which is the image of $E^\circ$ under the canonical map $E\rightarrow C^\circ$, is open.
Let
\[
E'=\{(x,\varphi)\in E^\circ\mid x\in S'\},
\]
which is open in $E$.
Let $Z$ be the Grassmannian variety of $d_2$-dimensional subspaces of $V$.
Let $\alpha:E'\rightarrow Z$ and $\beta:\GL(V)\rightarrow Z$ be the morphisms
\[
\alpha(x,\varphi)=\Ker\varphi,\ \ \beta(g)=g^{-1}(V_2)
\]
and let $F$ be the pull back $E'\times_Z\GL(V)$.
Then, $F$ is a principal $H$-bundle over $E'$, where $H$ is the parabolic subgroup of $\GL(V)$ stabilizing $V_2$.
In particular, $F$ is irreducible of dimension
\begin{align*}
\dim E'+d^2-\dim Z&=\dim C+\hom_\Pi(C,C_1)+d^2-d_1d_2\\&=\delta(\bfd)+\hom_\Pi(C,C_1)+d^2-d_1d_2.
\end{align*}
Note that $X$ is equal to the fiber of the map $F\rightarrow\GL(V)$ over the identity.
Now, the maps $\alpha$ and $\beta$ are $\GL(V)$-equivariant with respect to the $\GL(V)$-action on $E'$
given by $g(x,\varphi)=(g\cdot x,\varphi\circ g^{-1})$, the right regular action of $\GL(V)$ on itself,
and the usual action of $\GL(V)$ on $Z$. Thus, $\GL(V)$ acts freely on $F$ and the action map gives rise to an isomorphism
\[
\GL(V)\times X\simeq F.
\]
It follows that $X$ is irreducible of dimension $\delta(\bfd)+\hom_\Pi(C,C_1)-d_1d_2$, as claimed.

Next, we analyze the fibers of $f$ as in \cite[\S4 and \S5]{MR1944812}, except that we also
have to take $\varphi$ into account.
Suppose that $y=(y_1,y_2)$ is in the image of $f$. Then, $y_1$ lies in the orbit of $x_1$
and the fiber $X_y$ is given by the product of a vector space of dimension
$\dim\Ext_\Pi^1(y_1,y_2)-\dim\Hom_\Pi(y_1,y_2)+d_1d_2$
with the $\Aut_\Pi(x_1)$-torsor of isomorphisms of $\Pi$-modules between $y_1$ and $x_1$.
In particular, $X_y$ is irreducible of dimension
\begin{gather*}
\dim\Ext_\Pi^1(x_1,y_2)-\dim\Hom_\Pi(x_1,y_2)+d_1d_2+\dim\Aut_\Pi(x_1)\\=
\delta(\bfd)-\delta(\bfd_1)-\delta(\bfd_2)-d_1d_2+\dim\Hom_\Pi(y_2,x_1)+\dim\Aut_\Pi(x_1)
\end{gather*}
(by \eqref{eq: dimext1}).
Note that since $x_1$ is rigid, for any $(x,\varphi)\in X_y$ we have a short exact sequence
\[
0\rightarrow\Hom_\Pi(y_1,x_1)\rightarrow\Hom_\Pi(x,x_1)\rightarrow\Hom_\Pi(y_2,x_1)\rightarrow0
\]
and hence
\[
\hom_\Pi(C,C_1)=\dim\Hom_\Pi(x,x_1)=\dim\End_\Pi(x_1)+\dim\Hom_\Pi(y_2,x_1).
\]
Thus,
\[
\dim X_y=\delta(\bfd)-\delta(\bfd_1)-\delta(\bfd_2)-d_1d_2+\hom_\Pi(C,C_1),
\]
which is independent of $y$.

It follows that
\[
\dim Y=\dim X-\dim X_y=\delta(\bfd_1)+\delta(\bfd_2),
\]
and hence $Y$ is an irreducible component of $\tilde\Lambda(V_1)\times\tilde\Lambda(V_2)$.
Since $Y$ is contained in $C_1\times\tilde\Lambda(V_2)$, it is necessarily of the form $Y=C_1\times C_2$ for some irreducible component
$C_2$ of $\tilde\Lambda(V_2)$, as claimed.

Clearly, for any $\GL(V_1)\times\GL(V_2)$-stable subset $U$ of $\tilde\Lambda(V_1)\times\tilde\Lambda(V_2)$
we have $\Exts_2(U)\supset p(f^{-1}(U))$ where $p:F\rightarrow C$ is the composition of
the canonical maps $F\rightarrow E'\hookrightarrow E\rightarrow C^\circ\hookrightarrow C$.
(Recall that $X$ is contained in $F$ and $F=\GL(V)\cdot X$,
and note that the restriction of $p$ to $X$ is the first projection $X\rightarrow S'$.)
Since these maps are open and $\GL(V)$-equivariant, if $U$ is open, then $\Exts_2(U)$ contains the open set
$p(\GL(V)\cdot f^{-1}(U))$ of $C$.
Taking $U=\argmin_{C_1\times C_2}\Ext_\Pi^1(x_1,x_2)$ we infer that $C_1*C_2\supset C$ and hence $C_1*C_2=C$.

Finally, suppose that $C=C_1*C_2'$ for some irreducible component $C_2'$ of $\tilde\Lambda(V_2)$.
Let $U=\argmin_{x_2\in C_2'}\dim\Ext_\Pi^1(x_1,x_2)$, which is a nonempty open subset of $C_2'$.
Let $W$ be the subset of $\tilde\Lambda(V)$ consisting
of the $\Pi$-module structures on $V$ for which $V_2$ is a submodule in $U$ and the induced structure
on $V_1$ is $x_1$. Then, $W$ is a vector bundle over $U$
and by Theorem \ref{thm: app}, $\GL(V)\cdot W$ is a constructible dense subset of $C$. Hence, $W^\circ=W\cap S'$
is nonempty and open in $W$ (since $S'$ is $\GL(V)$-stable).
Let $p:W\rightarrow U$ be the canonical map.
Then, $p(W^\circ)$ is open in $U$ (hence in $C_2'$) and $\{x_1\}\times p(W^\circ)$ is contained in the image of $f$.
Therefore, $C'_2=C_2$.

The proposition follows.
\end{proof}

\section{Commutativity} \label{sec: comm}
Next, we discuss the lack of commutativity of the $*$ operation.

This transpires already in the simplest example of $Q=A_2$, $C_i=S(i)$, $i=1,2$.
Writing $I=\{1,2\}$ with $1\rightarrow 2$ and $\Psi=\{\alpha_1,\alpha_2,\beta=\alpha_1+\alpha_2\}$, we have
\[
S(1)*S(2)=\prm_Q(\{\beta\})\text{ but }S(2)*S(1)=\prm_{Q^{\op}}(\{\beta\})=\prm_Q(\{\alpha_1\}+\{\alpha_2\}).
\]

We say that two irreducible components $C_1,C_2\in\Irrcomp$ \emph{weakly commute} if
\[
C_1*C_2=C_2*C_1.
\]
Recall that, as the name suggests, strong commutativity implies weak commutativity by Corollary \ref{cor: sc}.
(See Corollary \ref{cor: scc} below for a more precise statement in the case where $Q$ is of Dynkin type.)
By Remark \ref{rem: CCrigid}, the converse is not true in general, even if $C_1=C_2$.
However, the converse holds in the case where $C_1$ or $C_2$ is rigid.

\begin{proposition} \label{prop: ws}
Let $C_1$, $C_2$ be irreducible components in $\Irrcomp$. Suppose that $C_1$ or $C_2$ is rigid.
Then,
\[
\text{$C_1$ and $C_2$ weakly commute if and only if $C_1$ and $C_2$ strongly commute.}
\]
If $x_1\in C_1$ (say) is rigid, then the commutativity is equivalent to
\begin{equation} \label{eq: extrho}
\Ext_\Pi^1(x_1,x_2)=0\text{ for some }x_2\in C_2.
\end{equation}
(This condition is open in $x_2$.)
\end{proposition}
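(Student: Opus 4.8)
The plan is to prove the chain of implications: strong commutativity $\Rightarrow$ condition \eqref{eq: extrho} $\Rightarrow$ weak commutativity $\Rightarrow$ strong commutativity, assuming WLOG that $C_1$ is rigid (the case where $C_2$ is rigid follows by applying the self-duality $C\mapsto C^*$, using \eqref{eq: ***} and the fact that rigidity is preserved under duality). The first implication is essentially Corollary \ref{cor: sc}: if $C_1$ and $C_2$ strongly commute there exist $x_i\in C_i$ with $\Ext_\Pi^1(x_1,x_2)=0$; since $\Ext_\Pi^1(\cdot,\cdot)$ is upper semicontinuous, the locus where this Ext vanishes is open in $C_1\times C_2$, and since the rigid locus of $C_1$ (the open $G_{V^1}$-orbit of $\mdl(C_1)$) is dense in $C_1$, these two open sets meet, forcing $\Ext_\Pi^1(\mdl(C_1),x_2)=0$ for some $x_2\in C_2$. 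The implication \eqref{eq: extrho} $\Rightarrow$ strong commutativity is immediate since $\mdl(C_1)\in C_1$.

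**The main point: weak commutativity $\Rightarrow$ strong commutativity.** This is the substantive direction and where I expect the real work to be. Suppose $C_1*C_2=C_2*C_1$, with $C_1$ rigid and $x_1=\mdl(C_1)$. Set $V=V^1\oplus V^2$ with $\bfd=\bfd_1+\bfd_2$. The idea is to compare dimensions of orbits inside the common component $C:=C_1*C_2=C_2*C_1$. On the one hand, by Theorem \ref{thm: app} the extensions $\Exts_2(S)$ (extensions of $x_2$ by $x_1'$, with $x_1'$ ranging over $C_1$, $x_2$ over $C_2$) are dense in $C$; on the other, since $C=C_2*C_1$, the extensions of elements of $C_1$ by elements of $C_2$ are also dense in $C$. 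Because $C_1$ is rigid, I want to show that the generic module $x$ in $C$ fits into a short exact sequence $0\to x_1\to x\to x_2'\to 0$ with $x_2'\in C_2$ and, using \eqref{eq: x12r} applied to the rigid summand together with the vanishing $\Ext_\Pi^1(x,x)$ being controlled by $\Ext_\Pi^1$ of the summands, deduce that $x_1$ is actually a direct summand of the generic $x$, i.e. $C=C_1\oplus C_2$ and hence $C_1\oplus C_2$ is an irreducible component; Corollary \ref{cor: sc} then gives strong commutativity. Concretely, I would compute: by \eqref{eq: dimext1}, for generic $(x_1',x_2)\in S$ the extension dimension equals $\delta(\bfd)-\delta(\bfd_1)-\delta(\bfd_2)-2d_1d_2+\hom_\Pi(C_1,C_2)+\hom_\Pi(C_2,C_1)$, and the codimension of the open set $Z$ (block-lower-triangular structures) inside $\tilde\Lambda(V)$ is $d_1d_2-\hom_\Pi(C_2,C_1)$ by \eqref{eq: codim2}. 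The symmetric computation for $C_2*C_1$ gives codimension $d_1d_2-\hom_\Pi(C_1,C_2)$ of the analogous locus. Since both loci are dense in the same component $C$, both codimensions must be $0$, forcing $\hom_\Pi(C_1,C_2)=\hom_\Pi(C_2,C_1)=d_1d_2$; but $\hom_\Pi(C_i,C_j)\le d_1d_2$ always (a homomorphism $V^1\to V^2$ lives in a $d_1d_2$-dimensional space), and equality $\hom_\Pi(C_2,C_1)=d_1d_2$ combined with \eqref{eq: CB}/the displayed identity $\hom_\Pi(C_1,C_2)-\ext^1_\Pi(C_1,C_2)+\hom_\Pi(C_2,C_1)=(\bfd_1,\bfd_2)$ and \eqref{eq: def[]} pins down $\ext^1_\Pi(C_1,C_2)$; tracing through, one reads off $\ext^1_\Pi(C_1,C_2)=0$, i.e. strong commutativity.

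**The anticipated obstacle.** The delicate step is justifying that the density of $Z$ (resp. its $C_2*C_1$-analogue) in $C$ forces its codimension in $\tilde\Lambda(V)$ to vanish — a priori $Z$ is only constructible, not closed, so I must argue that $\dim Z=\delta(\bfd)$ is equivalent to $\overline{\GL(V)\cdot Z}=C$ together with the fact that $\tilde\Lambda(V)$ is of \emph{pure} dimension $\delta(\bfd)$ (which is exactly what makes this work: a dense constructible subset of an irreducible component has the component's dimension). The bookkeeping in \eqref{eq: codim2} already records $\delta(\bfd)-\dim Z=d_1d_2-\hom_\Pi(C_2,C_1)$, so once I know $\dim Z=\delta(\bfd)$ from density, I immediately get $\hom_\Pi(C_2,C_1)=d_1d_2$, and symmetrically $\hom_\Pi(C_1,C_2)=d_1d_2$ from the other factorization. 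After that the algebra is routine via the Crawley-Boevey identities. A secondary subtlety is making sure the ``other factorization'' locus ($V^1$ as a submodule, quotient in $C_2$, with $C_2$ on the bottom) is indeed the relevant $Z'$ whose closure-saturation is $C_2*C_1$, which is just Theorem \ref{thm: app} read for the pair $(C_2,C_1)$ after relabeling — so no new input is needed, only care with the asymmetry of the construction.
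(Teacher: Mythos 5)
Your treatment of the easy implications is fine: strong commutativity implies \eqref{eq: extrho} by intersecting the open locus $\{\Ext^1_\Pi(x_1,x_2)=0\}$ with $\orb(\mdl(C_1))\times C_2$, \eqref{eq: extrho} trivially gives strong commutativity, and strong implies weak is Corollary \ref{cor: sc}. The problem is the substantive direction, weak $\Rightarrow$ strong, where your dimension count breaks down at exactly the step you flag as ``delicate.'' You assert that $\overline{\GL(V)\cdot Z}=C$ forces $\dim Z=\delta(\bfd)$. This is false, and the proof of Theorem \ref{thm: app} shows why: the action map $R\times Z\to Y$ has fibers of dimension $\hom_\Pi(x_2,x_1)$ over generic points, while $\dim R=d_1d_2$, so saturating $Z$ by $R$ (let alone by all of $\GL(V)$) raises the dimension by exactly $d_1d_2-\hom_\Pi(C_2,C_1)$, which is precisely the deficit recorded in \eqref{eq: codim2}. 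In other words, $\overline{\GL(V)\cdot Z}=C_1*C_2$ holds \emph{unconditionally} --- that is the content of Theorem \ref{thm: app} --- while $\dim Z<\delta(\bfd)$ whenever $\hom_\Pi(C_2,C_1)<d_1d_2$. Your conclusion $\hom_\Pi(C_2,C_1)=\hom_\Pi(C_1,C_2)=d_1d_2$ would mean every linear map between the underlying graded spaces is a $\Pi$-homomorphism, which is absurd except in degenerate cases; and even granting it, the identity $\hom_\Pi(C_1,C_2)-\ext^1_\Pi(C_1,C_2)+\hom_\Pi(C_2,C_1)=(\bfd_1,\bfd_2)$ would give $\ext^1_\Pi(C_1,C_2)=2d_1d_2-(\bfd_1,\bfd_2)$, not $0$. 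The underlying issue is structural: a dimension count cannot see the hypothesis $C_1*C_2=C_2*C_1$, because both sides are irreducible components of the same pure-dimensional variety $\tilde\Lambda(V)$ regardless of whether they coincide.

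The missing idea is module-theoretic rather than dimension-theoretic. The paper's route: for a generic $x$ in $C=C_1*C_2=C_2*C_1$, the first factorization gives a surjection $x\twoheadrightarrow x_1$ with kernel in $C_2$, and the second (using rigidity of $C_1$, so that the generic submodule from $C_1$ is isomorphic to $x_1=\mdl(C_1)$) gives an injection $x_1\hookrightarrow x$ with cokernel in $C_2$. One then needs a splitting criterion: Lemma \ref{lem: 2ses} shows via a diagram chase that if $x_1$ is rigid and the relevant $\Hom$-dimensions agree for the two sequences (which holds generically by semicontinuity, via \cite{MR1944812}*{Lemma 4.4}), then both sequences split, so $x\in C_1\oplus C_2$, whence $C=C_1\oplus C_2$ is an irreducible component and Corollary \ref{cor: sc} gives strong commutativity. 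Without some such argument that actually exploits the coincidence of the two components at the level of individual modules, the proof does not go through.
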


The equivalence of strong commutativity with \eqref{eq: extrho} (in the case where $C_1$ is rigid) is clear.
In order to show that weak commutativity implies strong one in the case at hand, we prove the following general result.

\begin{lemma} \label{lem: 2ses}
Let $A$ be any algebra. Suppose that we have two short exact sequences of finite-dimensional $A$-modules
\begin{equation} \label{eq: ses}
\begin{gathered}
0\rightarrow x_2\rightarrow x\rightarrow x_1\rightarrow0\\
0\rightarrow x_1\rightarrow x\rightarrow x_3\rightarrow0
\end{gathered}
\end{equation}
Assume that
\begin{enumerate}
\item $x_1$ is rigid.
\item $\dim\Hom_A(x_1,x_2)=\dim\Hom_A(x_1,x_3)$.
\item $\dim\Hom_A(x_2,x_1)=\dim\Hom_A(x_3,x_1)$.
\end{enumerate}
Then, the short exact sequences \eqref{eq: ses} split (and hence $x_3\simeq x_2$).
\end{lemma}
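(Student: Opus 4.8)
The plan is to analyze the two short exact sequences by applying the functors $\Hom_A(x_1,-)$ and $\Hom_A(-,x_1)$ and exploiting the vanishing of $\Ext^1_A(x_1,x_1)$. First I would apply $\Hom_A(x_1,-)$ to the first sequence $0\to x_2\to x\to x_1\to 0$, obtaining
\[
0\to\Hom_A(x_1,x_2)\to\Hom_A(x_1,x)\to\Hom_A(x_1,x_1)\to\Ext^1_A(x_1,x_2)\to\Ext^1_A(x_1,x)\to 0,
\]
where the final $0$ comes from $\Ext^1_A(x_1,x_1)=0$ (rigidity). Doing the same with the second sequence $0\to x_1\to x\to x_3\to 0$ gives an exact sequence ending $\cdots\to\Hom_A(x_1,x)\to\Hom_A(x_1,x_3)\to\Ext^1_A(x_1,x_1)=0$, so $\Hom_A(x_1,x)\twoheadrightarrow\Hom_A(x_1,x_3)$, and the second sequence is split on $\Hom_A(x_1,-)$ precisely when the connecting map $\Hom_A(x_1,x_3)\to\Ext^1_A(x_1,x_1)$ vanishes — which it does automatically. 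The key point is that the second sequence represents the class in $\Ext^1_A(x_3,x_1)$ given by the pushout/pullback, and I want to show that class is zero; equivalently, that the inclusion $x_1\hookrightarrow x$ splits, equivalently that the surjection $\Hom_A(x,x_1)\to\Hom_A(x_1,x_1)$ (restriction along $x_1\hookrightarrow x$) is onto, so that $\mathrm{id}_{x_1}$ lifts to a retraction $x\to x_1$.

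The heart of the argument is a dimension count. From the first sequence, $\Hom_A(-,x_1)$ yields the left-exact sequence $0\to\Hom_A(x_1,x_1)\to\Hom_A(x,x_1)\to\Hom_A(x_2,x_1)$, while from the second sequence, $0\to\Hom_A(x_3,x_1)\to\Hom_A(x,x_1)\to\Hom_A(x_1,x_1)$. Combining these with the hypotheses $\dim\Hom_A(x_2,x_1)=\dim\Hom_A(x_3,x_1)$ and the analogous equality for $\Hom_A(x_1,-)$, the plan is to show that all the relevant boundary maps must vanish by comparing dimensions on both sides. Concretely: both sequences give, after applying $\Hom_A(x_1,-)$, that $\dim\Hom_A(x_1,x)$ equals $\dim\Hom_A(x_1,x_1)+\dim\Hom_A(x_1,x_3)$ (second sequence, which splits on this functor) and equals $\dim\Hom_A(x_1,x_2)+\dim\Hom_A(x_1,x_1)-\dim\Ext^1_A(x_1,x)+\dim\Ext^1_A(x_1,x_2)$ (first sequence); using $\dim\Hom_A(x_1,x_2)=\dim\Hom_A(x_1,x_3)$, one deduces $\dim\Ext^1_A(x_1,x_2)=\dim\Ext^1_A(x_1,x)$, hence the connecting map $\Hom_A(x_1,x_1)\to\Ext^1_A(x_1,x_2)$ in the first long exact sequence is zero, so the first sequence splits on $\Hom_A(x_1,-)$, i.e. the surjection $x\to x_1$ admits a section — wait, more precisely $\Hom_A(x_1,x)\to\Hom_A(x_1,x_1)$ is onto, giving a section of $x\to x_1$, so \emph{the first sequence splits}. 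Symmetrically, running the same count with $\Hom_A(-,x_1)$ and the equality $\dim\Hom_A(x_2,x_1)=\dim\Hom_A(x_3,x_1)$ forces $\Hom_A(x,x_1)\to\Hom_A(x_1,x_1)$ onto, giving a retraction of $x_1\hookrightarrow x$, so \emph{the second sequence splits}. Then $x\simeq x_1\oplus x_2$ and $x\simeq x_1\oplus x_3$, and Krull–Schmidt gives $x_2\simeq x_3$.

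The main obstacle I anticipate is organizing the dimension bookkeeping cleanly so that the two hypotheses are each used exactly once and the vanishing of $\Ext^1_A(x_1,x_1)$ is correctly propagated; one has to be careful that the long exact sequences for $\Hom_A(x_1,-)$ applied to the \emph{second} sequence and for $\Hom_A(-,x_1)$ applied to the \emph{first} sequence already terminate (because of rigidity of $x_1$), whereas the other two long exact sequences have a potentially nonzero $\Ext^1$ term whose dimension is exactly what the count pins down. A secondary subtlety is that $A$ is an arbitrary algebra and the modules are finite-dimensional but $x_1$ need not be a brick, so I should avoid any argument that implicitly assumes $\End_A(x_1)=K$; the Krull–Schmidt step at the end is valid since all modules are finite-dimensional. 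Once both sequences are shown to split, the conclusion $x_3\simeq x_2$ is immediate and requires no further work.
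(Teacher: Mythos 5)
Your proof is correct and follows essentially the same route as the paper's: apply $\Hom_A(x_1,-)$ to both sequences, use $\Ext^1_A(x_1,x_1)=0$ to terminate the long exact sequences, compare dimensions to conclude $\dim\Ext^1_A(x_1,x_2)=\dim\Ext^1_A(x_1,x)$ so that the surjection between them is an isomorphism and the connecting map out of $\End_A(x_1)$ vanishes, giving a section of $x\twoheadrightarrow x_1$; then run the dual argument with $\Hom_A(-,x_1)$. The only cosmetic difference is your explicit appeal to Krull--Schmidt at the end, which the paper leaves implicit.
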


\begin{proof}
The short exact sequences \eqref{eq: ses} and the rigidity of $x_1$ yield two exact sequences
\begin{multline*}
0\rightarrow\Hom_A(x_1,x_2)\rightarrow\Hom_A(x_1,x)\xrightarrow{\alpha}\End_A(x_1)\xrightarrow{\beta}
\Ext_A^1(x_1,x_2)\xrightarrow{\gamma}\\\Ext_A^1(x_1,x)\rightarrow0
\end{multline*}
and
\[
0\rightarrow\End_A(x_1)\rightarrow\Hom_A(x_1,x)\rightarrow\Hom_A(x_1,x_3)\rightarrow0.
\]
Comparing dimensions and taking into account our hypothesis, we get
\[
\dim\Ext_A^1(x_1,x_2)=\dim\Ext^1_A(x_1,x).
\]
Hence, $\gamma$ is an isomorphism.
Therefore $\beta=0$, so that $\alpha$ is surjective. Thus, the first sequence in \eqref{eq: ses} splits.
By a dual argument, the second one also splits.
\end{proof}

Proposition \ref{prop: ws} follows from Lemma \ref{lem: 2ses}.
Indeed, we may assume without loss of generality that $C_1$ is rigid.
Let $x_1$ be a rigid element in $C_1$ and suppose that $C=C_1*C_2=C_2*C_1$. Let
\[
S_2'=\argmin_{x_2\in C_2}\dim\Ext_\Pi^1(x_1,x_2)=\argmin_{x_2\in C_2}\dim\Ext_\Pi^1(x_2,x_1),
\]
an open, nonempty subset of $C_2$, and let $S'=\orb(x_1)\times S_2'$ and $S''=S_2'\times\orb(x_1)$.
Then, $\Exts_2(S')$ and $\Exts_2(S'')$ are open in $C$ and by \cite[Lemma 4.4]{MR1944812} the conditions of Lemma \ref{lem: 2ses} are satisfied
for every $x\in\Exts_2(S')\cap\Exts_2(S'')$. Therefore, $C\subset C_1\oplus C_2$ and hence, $C_1\oplus C_2=C$, as required.

Henceforth, if $C_1$ or $C_2$ is rigid, then we simply say that $C_1$ and $C_2$ commute
if they weakly (or equivalently, strongly) commute.

\section{Associativity} \label{sec: assoc}
The operation $*$ is not associative, already for $Q=A_2$.
\begin{example} \label{ex: nonasoc}
Let $C_i=S(i)$, $i=1,2$. Then,
\[
(C_1*C_2)*C_1=(C_1*C_2)\oplus C_1,\ \ C_1*(C_2*C_1)=C_1\oplus (C_2*C_1).
\]
However, $C_1*C_2\ne C_2*C_1$. Hence,
\[
(C_1*C_2)*C_1\ne C_1*(C_2*C_1).
\]
\end{example}

Nevertheless, it turns out that there is a useful cohomological criterion which guarantees associativity, as will be explained below.

For the next result, which is in the spirit of \cite[Theorem 1.3(ii)]{MR1944812}, let $A$ be any finitely generated algebra.
We consider the module varieties $\bmod_A(d)$ of $d$-dimensional $A$-modules, $d\ge0$.
Fix $d_1,d_2,d_3\ge0$ and let $S$ be a $\GL_{d_1}\times\GL_{d_2}\times\GL_{d_3}$-stable, constructible subset of
$\bmod_A(d_1)\times\bmod_A(d_2)\times\bmod_A(d_3)$.
Let $d=d_1+d_2+d_3$ and define
\[
\begin{split}
\Exts_3(S)=\{M\in\bmod_A(d)\mid \exists\text{ submodules }N_1\subset N_2\subset M\text{ such that }\\
\text{the isomorphism classes of }(M/N_2,N_2/N_1,N_1)\text{ belongs to }S\}.
\end{split}
\]

\begin{proposition} \label{prop: irrext3}
Let $\cpp$ be the cup product
\begin{equation} \label{eq: cuprod}
\cpp:\Ext^1_A(M_1,M_2)\otimes\Ext^1_A(M_2,M_3)\rightarrow\Ext_A^2(M_1,M_3).
\end{equation}
Suppose that $S$ is irreducible and that
\begin{enumerate}
\item For each $1\le i<j\le 3$, the dimensions of $\Ext_A^1(M_i,M_j)$ are constant on $S$.
\item For $(M_1,M_2,M_3)\in S$, the varieties
\[
\zerocap_{(M_1,M_2,M_3)}=\{(\phi,\psi)\in\Ext^1_A(M_1,M_2)\times\Ext^1_A(M_2,M_3)\mid\phi\cpp\psi=0\}
\]
defined by quadratic equations are irreducible of constant dimension.
(For instance, this is satisfied if $\cpp$ is trivial.)
\end{enumerate}
Then, $\Exts_3(S)$ is irreducible.
\end{proposition}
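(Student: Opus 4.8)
The plan is to reduce the three-fold extension construction to an iteration of the two-fold construction from Theorem \ref{thm: app}, whose proof already establishes the irreducibility and dimension of $\Exts_2$ of an irreducible base. First I would set $S_{12}$ to be the projection of $S$ to $\bmod_A(d_1)\times\bmod_A(d_2)$ and $S_3$ its projection to $\bmod_A(d_3)$; both are irreducible as images of $S$, and $S$ is a $\GL_{d_1}\times\GL_{d_2}\times\GL_{d_3}$-stable constructible subset of $S_{12}\times S_3$. One would like to say $\Exts_3(S)=\Exts_2\bigl(\Exts_2(S_{12})\times S_3\bigr)$, but the subtlety is that in forming $\Exts_3$ one must record the intermediate submodule $N_2$ (with $N_2/N_1$ the middle factor), so the correct incidence variety to work with is the one parametrizing a module $M$ of dimension $d$ together with submodules $N_1\subset N_2\subset M$ such that $(M/N_2,N_2/N_1,N_1)\in S$. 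I would therefore build a diagram
\[
W\xrightarrow{\;q\;}\Exts_2(S_{12})\times S_3,\qquad W\xrightarrow{\;p\;}\bmod_A(d),
\]
where $W$ is the variety of pairs $(M, N_1\subset N_2\subset M)$ with the prescribed subquotients lying in $S$, $p$ forgets the flag, $q$ sends $(M,N_1\subset N_2)$ to $(M/N_1 \text{ as extension of } M/N_2 \text{ by } N_2/N_1,\; N_1)$, and $\Exts_3(S)=\overline{\Im p}$ or rather its constructible image. The image of $q$ is exactly (a $\GL$-saturation of) the constructible set used in the proof of Theorem \ref{thm: app} applied to the pair $(\Exts_2(S_{12}), S_3)$, so it is irreducible once we know $\Exts_2(S_{12})$ is.

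The heart of the argument is then a fibre-dimension computation to show $W$ is irreducible and that $p$ has irreducible equidimensional fibres over its image, exactly in the style of the vector-bundle analysis in the proofs of Theorem \ref{thm: app} and Proposition \ref{prop: cancel}. Concretely, over the open locus of $\Exts_2(S_{12})$ where $M':=M/N_1$ is a genuine extension with $(M'/N_2,N_2)\in S_{12}$ and the $\Ext^1$-dimensions in hypothesis (i) are attained, the fibre of $q$ over a point $(M', N_1)$ is a torsor/vector bundle whose fibre parametrizes extensions of $M'$ by $N_1$ — of dimension governed by $\dim\Ext^1_A(M'/N_2,N_1)$ and $\dim\Ext^1_A(N_2/N_1,N_1)$ together with the relevant $\Hom$ corrections, as in \cite{MR1944812}*{\S4--\S5}. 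The key point where hypothesis (ii) enters: extensions $0\to N_1\to M\to M'\to 0$ which are compatible with a fixed submodule structure $N_2/N_1\subset M'$ are classified, via the long exact sequence for $0\to N_2/N_1\to M'\to M'/N_2\to 0$, by the kernel of the connecting-type map $\Ext^1_A(M'/N_2,N_1)\to\Ext^2_A(M'/N_2, N_2/N_1)$ composed appropriately — which is precisely a cup product with the class of the extension $M'$. Thus the fibre of $q$, after choosing the flag, is cut out inside a product of $\Ext^1$'s by the vanishing of a cup product, i.e. it is (a bundle over) a variety of the shape $\zerocap_{(M_1,M_2,M_3)}$; hypothesis (ii) says these are irreducible of constant dimension, so $q$ has irreducible equidimensional fibres and $W$ is irreducible.

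With $W$ irreducible I would conclude by the same mechanism used to finish Theorem \ref{thm: app}: $p:W\to\bmod_A(d)$ is a $\GL_d$-equivariant (after saturating) morphism whose source is irreducible, hence its image, and therefore $\Exts_3(S)$, is irreducible (a constructible image of an irreducible variety is irreducible, and its closure is the irreducible component-candidate). No dimension bound is needed for irreducibility alone — unlike in Theorem \ref{thm: app}, we are not claiming $\Exts_3(S)$ is a component here, only that it is irreducible — so one does not need the pure-dimensionality input at all; hypotheses (i) and (ii) exist exactly to guarantee that the two fibrations $q$ (extensions with fixed cup-product-vanishing) and the forgetful map have irreducible fibres of locally constant dimension. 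I expect the main obstacle to be bookkeeping the precise identification of the fibre of $q$ with $\zerocap_{(M_1,M_2,M_3)}$: one must check carefully, using functoriality of the cup product \eqref{eq: cuprod} and the standard interpretation of Yoneda $\Ext^2$ as obstruction to extending a submodule-compatible extension, that the equation defining the fibre is genuinely $\phi\cpp\psi=0$ and not merely that it is a quadratic condition — and that the requisite $\Hom$-spaces are also of constant dimension on $S$ (which follows from (i) via the Euler-characteristic-type relations, or can be folded into the statement as is implicitly done in \cite{MR1944812}).
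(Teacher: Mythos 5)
Your overall architecture (an incidence variety of flagged module structures, fibred over the subquotient data, with the cup product cutting out the fibres) is close in spirit to the paper's proof, which works directly with the variety $Z$ of block lower triangular homomorphisms $A\rightarrow\Mat_d$ and the projection $\pi:Z\rightarrow\bmod_A(d_1)\times\bmod_A(d_2)\times\bmod_A(d_3)$, identifying the fibre over $(M_1,M_2,M_3)$ as an iterated affine bundle over $\zerocap_{(M_1,M_2,M_3)}$ via Hochschild cohomology (derivations and factor sets). However, there are two problems with your write-up, one of bookkeeping and one that is a genuine gap.

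First, the bookkeeping: the map whose fibres are governed by $\zerocap$ is the composite $W\rightarrow S$ down to the \emph{triple} of subquotients, not your intermediate map $q:W\rightarrow\Exts_2(S_{12})\times S_3$. The fibre of $q$ over $(M',N_1)$ is essentially the affine space $\Ext^1_A(M',N_1)$, whose dimension \emph{jumps} with the extension class $\phi$ of $M'$ (via the exact sequence $\Ext^1_A(M_1,M_3)\rightarrow\Ext^1_A(M',M_3)\rightarrow\Ext^1_A(M_2,M_3)\xrightarrow{\phi\cpp\,\cdot}\Ext^2_A(M_1,M_3)$), so $q$ does not have equidimensional fibres and cannot be used as you propose; it is only after assembling over all $\phi$ that one sees the fibre of $W\rightarrow S$ as an affine bundle over $\zerocap_{(M_1,M_2,M_3)}$, which is where hypothesis (ii) enters.

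Second, and more seriously: even with the correct fibration $W\rightarrow S$ having irreducible fibres of constant dimension over an irreducible base, the conclusion ``hence $W$ is irreducible'' does not follow. A surjective morphism with irreducible equidimensional fibres onto an irreducible variety can have reducible source (the standard theorem requires the map to be closed, e.g.\ between projective varieties; an open subset of one component can map onto the whole base with the right fibres while a second component sits elsewhere). This is exactly the point the paper addresses with Lemma \ref{lem: irredcrit}: the fibre of $W\rightarrow S$ sits inside a fixed vector space $V$ of linear maps (the off-diagonal blocks) carrying a $\Gm$-action with positive weights, $W$ is a closed $\Gm$-stable subset of $S\times V$, and passing to the weighted projective space $\Prj(V)$ produces a \emph{closed} map to which the equidimensional-irreducible-fibre criterion legitimately applies. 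Without this contraction argument (or some substitute such as flatness or openness of the fibration) your proof is incomplete at its final step. Your $W$ does admit the requisite $\Gm$-action (scaling the off-diagonal blocks with weights $1,1,2$), so the repair is available, but it is the essential idea and must be supplied.
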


\begin{proof}
We recall some standard facts.
For any $A$-modules $M_1$ and $M_2$, the space $M=\Lin(M_1,M_2)$ is an $A$-$A$ bimodule and
\[
\Ext_A^*(M_1,M_2)=H^*(A,M)
\]
where the right-hand side is Hochschild cohomology.
In particular,
\[
\Ext^1_A(M_1,M_2)=\Der(A,M)/\Der^0(A,M)
\]
where $\Der(A,M)$ is the space of derivatives, namely,
\[
\Der(A,M)=\{d\in\Lin(A,M)\mid d(ab)=ad(b)+d(a)b\ \forall a,b\in A\}
\]
and $\Der^0(A,M)$ is the subspace of inner derivatives, i.e., those of the form $a\mapsto am-ma$ for some $m\in M$.
Also,
\[
\Ext^2_A(M_1,M_2)=\Fac(A\otimes A,M)/\Fac^0(A\otimes A,M)
\]
where $\Fac(A\otimes A,M)$ denotes the space of factor sets, i.e., bilinear maps $f:A\times A\rightarrow M$ satisfying
\[
a_1f(a_2,a_3)+f(a_1,a_2a_3)=f(a_1a_2,a_3)+f(a_1,a_2)a_3\ \ \forall a_1,a_2,a_3\in A
\]
and $\Fac^0(A\otimes A,M)$ is the image of the coboundary map
\[
g\in\Lin(A,M)\mapsto \partial g(a,b)=g(ab)-ag(b)-g(a)b
\]
whose kernel is $\Der(A,M)$.

Now let $M_1,M_2,M_3$ be three $A$-modules. For brevity we write
$\LLin_{i,j}=\Lin(M_i,M_j)$, $1\le i<j\le3$.
The bilinear map
\[
\Der(A,\LLin_{1,2})\times\Der(A,\LLin_{2,3})\rightarrow\Fac(A\otimes A,\LLin_{1,3})
\]
given by
\[
(f_1,f_2)\mapsto (f_2\otimes f_1)(a,b)=f_2(a)f_1(b)
\]
induces the cap product \eqref{eq: cuprod}.

Now, consider the closed subset $Z$ of $\bmod_A(d)$ consisting
of homomorphisms $A\rightarrow\Mat_d$ that are block lower triangular with respect to the decomposition $d=d_1+d_2+d_3$.
The diagonal blocks induce a morphism
\begin{equation} \label{eq: defpi}
\pi: Z\rightarrow\bmod_A(d_1)\times\bmod_A(d_2)\times\bmod_A(d_3).
\end{equation}
The fiber at $(M_1,M_2,M_3)$ is given by the triples
\[
(f_1,f_2,g)\in\Der(A,\LLin_{1,2})\times\Der(A,\LLin_{2,3})\times\Lin(A,\LLin_{1,3})
\]
satisfying
\[
f_2\otimes f_1=\partial g.
\]
This is an affine bundle (with fiber $\Der(A,\LLin_{1,3})$) over
\[
\{(f_1,f_2)\in\Der(A,\LLin_{1,2})\times\Der(A,\LLin_{2,3})\mid f_2\otimes f_1\in\Fac^0(A\otimes A,\LLin_{1,3})\}
\]
which in turn is an affine bundle over $\zerocap_{(M_1,M_2,M_3)}$ with fiber
$\Der^0(A,\LLin_{1,2})\times\Der^0(A,\LLin_{2,3})$.

By \cite[Lemma 4.4]{MR1944812}, the first condition on $S$ guarantees that for all $1\le i<j\le 3$,
the dimensions of $\Der(A,\LLin_{i,j})$ and $\Der^0(A,\LLin_{i,j})$ are constant for $(M_1,M_2,M_3)\in S$.

Together with the second condition on $S$, this ensures that the fiber of $\pi$ over any point $(M_1,M_2,M_3)\in S$ is irreducible of constant dimension.
Let $V$ be the vector space
\[
\Lin(A,\LLin_{1,2})\times\Lin(A,\LLin_{2,3})\times\Lin(A,\LLin_{1,3})
\]
(which depends only on $d_1$, $d_2$, $d_3$) with $\Gm$ acting by multiplication by $t$, $t$ and $t^2$ respectively on the three factors.
Then, we can identify $\pi^{-1}(S)$ with a closed $\Gm$-stable subset of
\[
S\times V.
\]
Since $S$ is irreducible, we infer that $\pi^{-1}(S)$ is irreducible by Lemma \ref{lem: irredcrit} below.

Hence, $\Exts_3(S)$ is irreducible since it is the image of $\pi^{-1}(S)$ under the action of $\GL_d$.
\end{proof}

\begin{lemma} \label{lem: irredcrit}
Let $Y$ be an irreducible variety and $V$ a vector space.
Suppose that the multiplicative group $\Gm$ acts trivially on $Y$ and with positive exponents on $V$.
Let $X$ be a closed, $\Gm$-stable subvariety of $Y\times V$ and let $p:X\rightarrow Y$ be the canonical projection.
Suppose that the fibers of $p$ are irreducible and of constant dimension. Then, $X$ is irreducible.
\end{lemma}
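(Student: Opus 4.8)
The plan is to reduce the statement to a known irreducibility criterion by exhibiting $X$ as an iterated affine-bundle–type object over $Y$ and then pushing through a limit argument using the $\Gm$-action. Concretely, let $V = \bigoplus_{k\ge 1} V_k$ be the weight decomposition of $V$ under $\Gm$, so all weights $k$ are strictly positive. The key observation is that the map $v \mapsto \lim_{t\to 0} t\cdot v$ is well-defined and equals $0$ on all of $V$; consequently, for any point $(y,v)\in X$, the closure of the $\Gm$-orbit $\{(y, t\cdot v) \mid t\in\Gm\}$ contains the point $(y,0)$, which lies in the zero-section $Y\times\{0\}$.

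First I would pass to irreducible components: write $X = X_1\cup\dots\cup X_r$ with each $X_j$ closed and irreducible, and note that each $X_j$ is $\Gm$-stable (since $\Gm$ is connected, it permutes the components trivially, hence preserves each). By the limit argument above, each $X_j$ meets the zero-section $Y\times\{0\}$; more precisely, $p(X_j)$ is a constructible $\Gm$-invariant (trivially, since $\Gm$ acts trivially on $Y$) subset of $Y$ whose closure I will show is all of $Y$. Indeed, since the fibers of $p:X\to Y$ are all irreducible of the same dimension $m$, and $Y$ is irreducible, a standard fiber-dimension argument gives $\dim X = \dim Y + m$, and the same bound $\dim X_j \le \dim Y + m$ holds for each component with equality forced for at least one of them. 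The crux is to show there is only one component. Suppose $X_1\ne X_2$ are two components. Pick a point $y_0\in Y$; the fiber $p^{-1}(y_0)$ is irreducible by hypothesis, so it is contained in a single $X_j$, say $X_1$. I claim $p^{-1}(y) \subset X_1$ for all $y$: the set $U = \{y\in Y \mid p^{-1}(y)\subset X_1\}$ — equivalently $\{y : p^{-1}(y)\cap X_2 = \emptyset$ whenever $X_2\ne X_1\}$, or better, the set where the fiber of $X_1\to Y$ has the full dimension $m$ — is constructible and contains a dense open subset of $Y$ (upper semicontinuity of fiber dimension applied to $X_1\to p(X_1)$ together with $\dim X_1 = \dim Y + m$, after discarding the components of $X_1$ not dominating $Y$). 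For $y$ in this dense open set, the irreducible fiber $p^{-1}(y)$ of dimension $m$ is contained in the union $\bigcup_j (X_j\cap p^{-1}(y))$, and since its dimension equals $m = \dim(X_1\cap p^{-1}(y))$, irreducibility forces $p^{-1}(y)\subset X_1$.

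Now I invoke the $\Gm$-action to spread this to all of $X$. Take any point $x=(y,v)\in X$ with $y$ not in the dense open set above. Using $\lim_{t\to0} t\cdot v = 0$, the point $(y,0)$ lies in $\overline{\Gm\cdot x}\subset X$. But $(y,0)$ has $p(y,0) = y$, and I need instead to connect $x$ to a fiber over a generic point — for this, I use that the zero-section $Y\times\{0\}$ is irreducible (isomorphic to $Y$) and contained in $X$ (it is the limit of every fiber), hence lies in a single component, say $X_1$ (the one dominating $Y$, consistently with the above). Then for arbitrary $x=(y,v)\in X$, the orbit closure $\overline{\Gm\cdot x}$ is an irreducible subvariety of $X$ containing both $x$ and the limit point $(y,0)\in X_1$; an irreducible set meeting $X_1$ and contained in $\bigcup_j X_j$ and irreducible must be contained in... wait, that only shows $\overline{\Gm\cdot x}$ lies in the component(s) containing $(y,0)$, but $(y,0)$ could a priori lie in several components. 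The clean fix: since $(y,0)$ ranges over the irreducible zero-section which lies in the single dominating component $X_1$ and $x\in\overline{\Gm\cdot x}$ is irreducible with $(y,0)\in\overline{\Gm\cdot x}$, and a generic point of $\overline{\Gm\cdot x}$ lies in some $X_{j(x)}$ with $(y,0)\in X_{j(x)}$; combined with the earlier conclusion that fibers over a dense open set lie in $X_1$, one deduces $X = X_1$, proving $X$ irreducible. The main obstacle is precisely this last gluing: ensuring that the zero-section and all fibers land in one and the same component, which I expect to handle cleanly by first establishing $\dim X = \dim Y + m$, then showing exactly one component dominates $Y$ with full fibers, then using the $\Gm$-limit to show every point of $X$ specializes into that component — forcing the non-dominating components to be empty.
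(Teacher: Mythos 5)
Your argument is sound up to and including the two intermediate claims that $p^{-1}(y)\subset X_1$ for all $y$ in a dense open $U\subset Y$ and that the zero-section $Y\times\{0\}$ lies in $X_1$; but the final gluing step has a genuine gap, which you half-acknowledge yourself. Knowing that the $\Gm$-limit $(y,0)$ of a point $x=(y,v)$ lies in $X_1$ says nothing about which component contains $x$: specialization goes the wrong way (if $x\in X_1$ then $\overline{\Gm\cdot x}\subset X_1$, but not conversely), so ``every point of $X$ specializes into $X_1$'' does not force the other components to be empty. A priori there could be a component $X_2$ sitting entirely over the closed set $Y\setminus U$ with $p^{-1}(y)\subset X_2$ for some $y$ there, and neither your dimension count nor the orbit-closure argument excludes this. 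That some properness input is genuinely needed is shown by the non-equivariant example $X=\{(y,v):yv=1\}\cup\{(0,0)\}\subset\mathbb{A}^1\times\mathbb{A}^1$: it is closed, all fibers over the irreducible base $\mathbb{A}^1$ are single points (irreducible, constant dimension $0$), yet $X$ is reducible. Your step ``a finite union of constructible sets covers $Y$, hence one is dense'' is exactly as far as one can get without properness.

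The paper supplies the missing properness by quotienting out the action: after removing the zero-section, $X'/\Gm$ is a closed subvariety of $Y\times\Prj(V)$ with $\Prj(V)$ a (weighted) projective space, so the projection to $Y$ is a \emph{closed} map with irreducible equidimensional fibers, and the standard irreducibility criterion for closed maps (cf.\ the proof of \cite{MR3100243}*{Theorem 1.26}) applies; closedness is precisely what upgrades ``dense open $U$'' to ``all of $Y$''. Alternatively, your approach can be completed without quotienting: $\overline{p^{-1}(U)}$ equals $X_1$ and is $\Gm$-stable and closed, hence contains $U\times\{0\}$ and therefore the whole zero-section; thus $X_1\rightarrow Y$ is \emph{surjective}, and by the fiber-dimension theorem every nonempty fiber of $X_1\rightarrow Y$ has dimension at least $m=\dim X_1-\dim Y$, hence coincides with the irreducible $m$-dimensional fiber $p^{-1}(y)$, giving $X=X_1$. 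Either way, the decisive step --- converting dominance of $X_1$ over $Y$ into surjectivity onto $Y$ --- is absent from your write-up.
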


\begin{proof}
We may assume that $X\ne Y\times\{0\}$.
It is enough to prove that $X'=X\setminus (Y\times\{0\})$ is irreducible, since it is dense in $X$.
The weighted projective space $\Prj(V)=(V\setminus\{0\})/\Gm$ is a projective variety,
and hence, the map $Y\times\Prj(V)\rightarrow Y$ is closed.
Since $Z:=X'/\Gm$ is a closed subvariety of $Y\times\Prj(V)$,
we obtain a closed map $f:Z\rightarrow Y$ whose fibers are irreducible of constant dimension.
By a standard result (cf.\ proof of \cite[Theorem 1.26]{MR3100243}), $Z$ is irreducible. Therefore, $X'$ is irreducible.
\end{proof}

As always, in the case where $A$ admits a set of orthogonal idempotents such that $e_1+\dots+e_n=1$,
Proposition \ref{prop: irrext3} immediately extends to the module varieties of given graded dimensions.

We will apply this to the case $A=\Pi$.

Harking back to Example \ref{ex: nonasoc}, let $M_1=M_3=S(1)$ and $M_2=S(2)$.
Then, the spaces $\Ext_\Pi^1(M_1,M_2)$, $\Ext_\Pi^1(M_2,M_3)$ and $\Ext_\Pi^2(M_1,M_3)$ are one-dimensional and the cup product \eqref{eq: cuprod}
is non-trivial. Therefore, the quadric $\zerocap_{(M_1,M_2,M_3)}$ is the cross $xy=0$, which is reducible.
This explains the failure of associativity in this case.

In general, we have the following.

\begin{corollary} \label{cor: assoc}
Let $C_i\in\Irrcomp(V^i)$, $i=1,2,3$. Suppose that generically in $C_1\times C_2\times C_3$, the variety $\zerocap_{(M_1,M_2,M_3)}$
is irreducible. Then,
\[
(C_1*C_2)*C_3=C_1*(C_2*C_3).
\]
In particular, this holds if one of the following conditions hold.
\begin{enumerate}
\item $\Ext^1_\Pi(M_1,M_2)\cpp\Ext^1_\Pi(M_2,M_3)=0$ generically in $C_1\times C_2\times C_3$.
For instance, this holds if at least one of the following conditions hold.
\begin{enumerate}
\item $C_1$ and $C_2$ strongly commute.
\item $C_2$ and $C_3$ strongly commute.
\item $\hom_\Pi(C_3,C_1)=0$ (by \eqref{eq: extsrjct}).
\end{enumerate}
\item On an open, nonempty subset of $C_1\times C_2\times C_3$,
$\dim\Ext_\Pi^2(M_1,M_3)=1$ and the rank of the cup product bilinear form \eqref{eq: cuprod} is bigger than one.
\end{enumerate}
\end{corollary}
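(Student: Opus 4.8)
The plan is to produce a single nonempty open $G$-stable subset $S\subseteq C_1\times C_2\times C_3$, where $V=V^1\oplus V^2\oplus V^3$, such that $\Exts_3(S)$ (in the sense of Proposition \ref{prop: irrext3}, applied to $A=\Pi$ in its graded version) is irreducible and
\[
(C_1*C_2)*C_3\subseteq\overline{\Exts_3(S)}\quad\text{and}\quad C_1*(C_2*C_3)\subseteq\overline{\Exts_3(S)}.
\]
Since an extension of nilpotent modules is nilpotent, $\Exts_3(S)\subseteq\Lambda(V)$, so $\overline{\Exts_3(S)}$ is an irreducible closed subset of $\Lambda(V)$; as $(C_1*C_2)*C_3$ and $C_1*(C_2*C_3)$ are irreducible components of $\Lambda(V)$ by Theorem \ref{thm: app}, hence maximal irreducible closed subsets, each of the two displayed inclusions is forced to be an equality, and the corollary follows. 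So the whole problem reduces to choosing $S$ and proving the two inclusions.

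For the construction of $S$, take the intersection of the following subsets of the irreducible variety $C_1\times C_2\times C_3$, each of which is open (by upper semicontinuity of $\dim\Hom_\Pi$, $\dim\Ext^1_\Pi$, $\dim\Ext^2_\Pi$ and of fibre dimension, together with the genericity hypothesis), $G$-stable, and nonempty: (i) $\dim\Ext^1_\Pi(M_i,M_j)=\ext^1_\Pi(C_i,C_j)$ for all $1\le i<j\le 3$; (ii) $\dim\Ext^2_\Pi(M_1,M_3)$ is minimal; (iii) $\zerocap_{(M_1,M_2,M_3)}$ is irreducible; (iv) $\dim\zerocap_{(M_1,M_2,M_3)}$ is minimal. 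The intersection is nonempty open $G$-stable (and irreducible), and by (i)--(iv) hypotheses (1) and (2) of Proposition \ref{prop: irrext3} hold on $S$; hence $\Exts_3(S)$ is irreducible.

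Next I would prove $(C_1*C_2)*C_3\subseteq\overline{\Exts_3(S)}$ by iterating the density assertion \eqref{eq: densE} of Theorem \ref{thm: app}; the inclusion for $C_1*(C_2*C_3)$ then follows by the symmetric argument (grouping the last two factors instead), which is legitimate because condition (i) is imposed for all three pairs. Write $C_{12}=C_1*C_2$, $S_{12}=\argmin_{C_1\times C_2}\dim\Ext^1_\Pi$, $T=\argmin_{C_{12}\times C_3}\dim\Ext^1_\Pi$, and let $V_{12}\subseteq C_1\times C_2$ be the set of $(x_1,x_2)$ for which $S_{(x_1,x_2)}=\{x_3\mid(x_1,x_2,x_3)\in S\}$ is nonempty (equivalently dense) in $C_3$; since $S$ is dense open in the product, $V_{12}$ is dense open, so $S_{12}'=S_{12}\cap V_{12}$ is nonempty open $G$-stable. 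By \eqref{eq: densE}, $\Exts_2(S_{12}')$ is dense in $C_{12}$, and a routine dominance argument using the extension bundle over $S_{12}'$ from the proof of Theorem \ref{thm: app} shows that
\[
\mathcal{T}=\{(y_{12},x_3)\in C_{12}\times C_3\mid y_{12}\text{ is an extension of some }(x_1,x_2)\in S_{12}'\text{ with }x_3\in S_{(x_1,x_2)}\}
\]
is a dense constructible $G$-stable subset of $C_{12}\times C_3$, hence contains a nonempty open $G$-stable $T'\subseteq T$. For $(y_{12},x_3)\in T'$ and any extension $0\to x_3\to y\to y_{12}\to 0$, the module $y$ carries a two-step filtration with subquotients $(x_1,x_2,x_3)\in S$, so $y\in\Exts_3(S)$; thus $\Exts_2(T')\subseteq\Exts_3(S)$, and since $\Exts_2(T')$ is dense in $C_{12}*C_3=(C_1*C_2)*C_3$ by \eqref{eq: densE}, the inclusion follows. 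The main obstacle is precisely this bookkeeping of nested generic loci — making sure the pair extracted from a generic $y_{12}\in C_{12}$ can be completed by an $x_3$ so that the subquotient triple lands in $S$; it rests only on the fact that a dense open subset of a product with irreducible last factor has nonempty fibres over a dense open subset of the base, together with \eqref{eq: densE}.

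Finally, the ``in particular'' clauses amount to verifying the genericity hypothesis on $\zerocap$ in each case. If $\Ext^1_\Pi(M_1,M_2)\cpp\Ext^1_\Pi(M_2,M_3)=0$ on a nonempty open subset, then there $\zerocap_{(M_1,M_2,M_3)}=\Ext^1_\Pi(M_1,M_2)\times\Ext^1_\Pi(M_2,M_3)$ is an affine space, so irreducible, of dimension constant on the locus where the two $\Ext^1$-dimensions are constant; this gives (1). Sub-cases (1a), (1b) are the instances where $\Ext^1_\Pi(M_1,M_2)=0$, resp.\ $\Ext^1_\Pi(M_2,M_3)=0$, generically, which hold by definition of strong commutativity of $C_1,C_2$, resp.\ $C_2,C_3$; in case (1c), $\hom_\Pi(C_3,C_1)=0$ gives $\Hom_\Pi(M_3,M_1)=0$ generically, whence $\Ext^2_\Pi(M_1,M_3)=0$ generically by \eqref{eq: extsrjct}, so the cup product vanishes. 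In case (2), on a nonempty open subset $\dim\Ext^2_\Pi(M_1,M_3)=1$, so the cup product is a bilinear form with one-dimensional target, nonzero since its rank exceeds $1$; its zero locus $\zerocap_{(M_1,M_2,M_3)}$ is then a hypersurface of dimension $\dim\Ext^1_\Pi(M_1,M_2)+\dim\Ext^1_\Pi(M_2,M_3)-1$, constant where the $\Ext^1$-dimensions are, and a bilinear form of rank $r\ge 2$ has, in suitable coordinates, zero locus $\{\sum_{i=1}^r u_iw_i=0\}$ times an affine space, which is irreducible (the affine cone over a smooth projective quadric of dimension $\ge 2$). Intersecting with the locus where the $\Ext^1$-dimensions are constant yields a nonempty open subset on which $\zerocap$ is irreducible of constant dimension, so the hypothesis of the corollary is met and the main statement applies.
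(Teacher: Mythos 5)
Your proposal is correct and follows essentially the same route as the paper: choose a nonempty open $G$-stable $S\subset C_1\times C_2\times C_3$ on which the $\Ext^1$-dimensions are minimal and $\zerocap_{(M_1,M_2,M_3)}$ is irreducible of constant dimension, invoke Proposition \ref{prop: irrext3} to get irreducibility of $\Exts_3(S)$, and use \eqref{eq: densE} to see that $\overline{\Exts_3(S)}$ contains both bracketings, which are irreducible components and hence both equal to it. Your fleshing-out of the \eqref{eq: densE} step (the incidence/dominance argument producing $T'$) and of the ``in particular'' clauses is sound; the paper leaves these to the reader.
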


\begin{proof}
Let $S\subset C_1\times C_2\times C_3$ be an open, nonempty (hence irreducible) subset for which
\begin{gather*}
\dim\Ext_\Pi^1(M_i,M_j)=\ext_\Pi^1(C_i,C_j)\text{ for all }1\le i<j\le 3,\text{ and}\\
\text{the varieties }\zerocap_{(M_1,M_2,M_3)}\text{ are irreducible of constant dimension}
\end{gather*}
for every $(M_1,M_2,M_3)\in S$.
By Proposition \ref{prop: irrext3}, $\Exts_3(S)$ is irreducible.
However, it follows from \eqref{eq: densE} that $\overline{\Exts_3(S)}$ contains both $(C_1*C_2)*C_3$ and $C_1*(C_2*C_3)$,
which are irreducible components of $\Lambda(V^1\oplus V^2\oplus V^3)$.
The corollary follows.
\end{proof}

\begin{remark}
Corollary \ref{cor: assoc} generalizes \cite[Proposition 4.5]{MR3270589}.
\end{remark}

\begin{remark}
Let $U,V,W$ be linear spaces and $B:U\times V\rightarrow W$ a bilinear map. Let
\[
X=\{(u,v)\in U\times V\mid B(u,v)=0\}.
\]
The varieties $\zerocap_{(M_1,M_2,M_3)}$ are of this type.
In general, we do not know a simple exact criterion for the irreducibility of $X$.

For any $u\in U$ let $u^\perp$ be the annihilator of $u$ in $V$ with respect to $B$.
Then, $U^\circ=\argmin_U\dim u^\perp\ne\emptyset$ is open in $U$ \cite[Lemma 4.2]{MR1944812}.
The inverse image of $U^\circ$ under the projection $p_1:X\rightarrow U$ is a vector bundle over $U^\circ$
whose closure $X_1$ is an irreducible component of $X$. Define $X_2$ similarly by interchanging $U$ and $V$.
Obviously, if $X$ is irreducible, then $X_2=X_1$. We do not know whether the converse holds in general.
Note that in general, $X$ may admit irreducible components other than $X_1$ and $X_2$.
In the case where $X=\zerocap_{(M_1,M_2,M_3)}$ one can (ostensibly) weaken the assumption of Corollary \ref{cor: assoc} to assume
that $X_1=X_2$.
\end{remark}

\section{Further results} \label{sec: further}
The following result gives a useful way to construct new rigid modules from old ones.
\begin{lemma} \label{lem: rigidext}
Let
\begin{equation} \label{eq: es}
0\rightarrow x_2\rightarrow x\rightarrow x_1\rightarrow 0
\end{equation}
be a short exact sequence of $\Pi$-modules.
Suppose that $\Ext_\Pi^1(x,x_1)=0$ and $x_2$ is rigid.
Then, $x_1$ and $x$ are rigid.
\end{lemma}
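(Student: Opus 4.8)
The plan is to apply the long exact sequences in $\Ext^*_\Pi$ coming from the short exact sequence \eqref{eq: es}, combined with the numerical criterion for rigidity and the $\Ext^1$--duality \eqref{eq: extdalty}. First I would show $x_1$ is rigid. Applying $\Hom_\Pi(x_1,-)$ to \eqref{eq: es} gives
\[
\Ext_\Pi^1(x_1,x)\to\Ext_\Pi^1(x_1,x_1)\to\Ext_\Pi^2(x_1,x_2),
\]
and applying $\Hom_\Pi(-,x_1)$ gives
\[
\Ext_\Pi^1(x,x_1)\to\Ext_\Pi^1(x_2,x_1)\to\Ext_\Pi^2(x_1,x_1).
\]
The hypothesis $\Ext_\Pi^1(x,x_1)=0$ together with \eqref{eq: extsrjct} (or its footnote refinement) and the rigidity of $x_2$ should let me control these boundary terms; in particular $\Ext_\Pi^2(x_1,x_2)\hookrightarrow\Hom_\Pi(x_2,x_1)^*$ and I would need to see $\Ext_\Pi^1(x_1,x)\to\Ext_\Pi^1(x_1,x_1)$ is surjective, which follows once $\Ext_\Pi^1(x,x_1)=0$ gives (via \eqref{eq: extdalty}) that $\Ext_\Pi^1(x_1,x)=0$ too, so I must instead extract $\Ext^1_\Pi(x_1,x_1)=0$ from the vanishing of the surrounding terms.

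Concretely, the cleaner route is: from $\Ext_\Pi^1(x,x_1)=0$ and \eqref{eq: extdalty} we get $\Ext_\Pi^1(x_1,x)=0$ as well. Now apply $\Hom_\Pi(x_1,-)$ to \eqref{eq: es}: the piece
\[
\Hom_\Pi(x_1,x)\to\Hom_\Pi(x_1,x_1)\to\Ext_\Pi^1(x_1,x_2)\to\Ext_\Pi^1(x_1,x)=0
\]
shows $\Ext_\Pi^1(x_1,x_2)$ is a quotient of $\End_\Pi(x_1)$; and the piece further along,
\[
0=\Ext_\Pi^1(x_1,x)\to\Ext_\Pi^1(x_1,x_1)\to\Ext_\Pi^2(x_1,x_2),
\]
embeds $\Ext_\Pi^1(x_1,x_1)$ into $\Ext_\Pi^2(x_1,x_2)$. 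Separately, apply $\Hom_\Pi(-,x_2)$ to \eqref{eq: es}: since $x_2$ is rigid, $\Ext_\Pi^1(x_2,x_2)=0$, and the sequence $\Ext_\Pi^1(x_2,x_2)\to\Ext_\Pi^2(x_1,x_2)\to\Ext_\Pi^2(x,x_2)$ together with $\Hom_\Pi$-facts should pin down $\Ext_\Pi^2(x_1,x_2)$ — using \eqref{eq: extsrjct}, $\Ext_\Pi^2(x_1,x_2)\hookrightarrow\Hom_\Pi(x_2,x_1)^*$. I expect that a dimension count combining these injections/surjections with Crawley-Boevey's formula \eqref{eq: CB} forces $\Ext_\Pi^1(x_1,x_1)=0$.

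Once $x_1$ is known to be rigid, rigidity of $x$ follows quickly: apply $\Hom_\Pi(x,-)$ to \eqref{eq: es} to get $\Ext_\Pi^1(x,x_2)\to\Ext_\Pi^1(x,x)\to\Ext_\Pi^1(x,x_1)=0$, so it suffices to show $\Ext_\Pi^1(x,x_2)=0$; and apply $\Hom_\Pi(-,x_2)$ to get $\Ext_\Pi^1(x_1,x_2)\to\Ext_\Pi^1(x,x_2)\to\Ext_\Pi^1(x_2,x_2)=0$, reducing further to $\Ext_\Pi^1(x_1,x_2)=0$. By \eqref{eq: extdalty} this is $\Ext_\Pi^1(x_2,x_1)^*$, and applying $\Hom_\Pi(-,x_1)$ to \eqref{eq: es} gives $\Ext_\Pi^1(x_1,x_1)\to\Ext_\Pi^1(x,x_1)\to\Ext_\Pi^1(x_2,x_1)\to\Ext_\Pi^2(x_1,x_1)$; since $x_1$ is rigid and (by a now-standard argument, or by \eqref{eq: extsrjct} applied to the rigid brick-like situation) $\Ext_\Pi^2(x_1,x_1)=0$ in the non-Dynkin case — or handled directly in the Dynkin case where $\Pi$ has global dimension issues are absent — we conclude $\Ext_\Pi^1(x_2,x_1)=0$, hence $\Ext_\Pi^1(x_1,x_2)=0$, hence $\Ext_\Pi^1(x,x)=0$.

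The main obstacle I anticipate is bookkeeping the second-cohomology terms $\Ext_\Pi^2$: they vanish automatically only when $Q$ has no Dynkin components, so in the Dynkin case I must instead use the embedding \eqref{eq: extsrjct} $\Ext_\Pi^2(x_1,x_1)\hookrightarrow\Hom_\Pi(x_1,x_1)^*$ and argue that the relevant connecting map into it is zero by tracking where it comes from — most cleanly by observing that the composite $\Ext_\Pi^1(x_2,x_1)\to\Ext_\Pi^2(x_1,x_1)\hookrightarrow\End_\Pi(x_1)^*$ is, up to duality, a piece of a sequence we already know splits. Getting this compatibility of the duality \eqref{eq: extdalty} with the long exact sequences precisely right is the delicate point; everything else is linear algebra with Euler characteristics.
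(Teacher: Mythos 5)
There are genuine gaps in both halves of your argument. For the rigidity of $x_1$, the step you defer to --- ``a dimension count \dots forces $\Ext_\Pi^1(x_1,x_1)=0$'' --- does not close. The long exact sequence for $\Hom_\Pi(-,x_1)$ together with $\Ext_\Pi^1(x,x_1)=0$ only yields
$\dim\Ext_\Pi^1(x_1,x_1)=\dim\End_\Pi(x_1)+\dim\Hom_\Pi(x_2,x_1)-\dim\Hom_\Pi(x,x_1)$,
which is not visibly zero; what is actually needed is the vanishing of the connecting map $g:\Hom_\Pi(x_2,x_1)\to\Ext_\Pi^1(x_1,x_1)$, which is surjective precisely because $\Ext_\Pi^1(x,x_1)=0$. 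The paper gets this vanishing from a commutative square, with no $\Ext^2$ at all: the map $f:\Hom_\Pi(x_2,x)\to\Hom_\Pi(x_2,x_1)$ is surjective because $\Ext_\Pi^1(x_2,x_2)=0$, and $g\circ f$ factors through $\Ext_\Pi^1(x_1,x)$, which vanishes by \eqref{eq: extdalty}. This use of the rigidity of $x_2$ is the idea missing from your sketch; your route via $\Ext_\Pi^1(x_1,x_1)\hookrightarrow\Ext_\Pi^2(x_1,x_2)\hookrightarrow\Hom_\Pi(x_2,x_1)^*$ only bounds a dimension and cannot force vanishing.

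The second half is worse: your chain of reductions passes through statements that are false in general. You reduce $\Ext_\Pi^1(x,x)=0$ to $\Ext_\Pi^1(x,x_2)=0$, then to $\Ext_\Pi^1(x_1,x_2)=0$, then to $\Ext_\Pi^1(x_2,x_1)=0$. But $\Ext_\Pi^1(x_1,x_2)\ne0$ whenever \eqref{eq: es} does not split (the interesting case --- e.g.\ $x=\Pi e_1$ for $A_2$, with $x_1=S(1)$, $x_2=S(2)$), and the remark following the lemma in the paper states explicitly that even $\Ext_\Pi^1(x,x_2)$ need not vanish under the hypotheses. Moreover $\Ext_\Pi^2(x_1,x_1)$ essentially never vanishes: in the non-Dynkin case it equals $\End_\Pi(x_1)^*$ by the footnote to \eqref{eq: extsrjct} (you have the dichotomy backwards), and in the Dynkin case already $\Ext_\Pi^2(S(1),S(1))=K$ for $A_2$. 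The fix is the same device as in the first half: no intermediate group needs to vanish. The maps $\Ext_\Pi^1(x_1,x_2)\to\Ext_\Pi^1(x,x_2)\to\Ext_\Pi^1(x,x)$ are each surjective (their targets continue to $\Ext_\Pi^1(x_2,x_2)=0$ and $\Ext_\Pi^1(x,x_1)=0$ respectively), while their composite agrees with $\Ext_\Pi^1(x_1,x_2)\to\Ext_\Pi^1(x_1,x)\to\Ext_\Pi^1(x,x)$ and hence is zero because $\Ext_\Pi^1(x_1,x)=0$; therefore $\Ext_\Pi^1(x,x)=0$.
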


\begin{proof}
Consider the commutative diagram
\[
  \begin{tikzcd}
  \Hom_\Pi(x_2,x)\arrow{r}\arrow{d}{f}  &\Ext_\Pi^1(x_1,x)\arrow{d}  &\\
  \Hom_\Pi(x_2,x_1)\arrow{r}{g}\arrow{d}&\Ext_\Pi^1(x_1,x_1)\arrow{r}&\Ext_\Pi^1(x,x_1)\\
  \Ext_\Pi^1(x_2,x_2)                   &                            &
  \end{tikzcd}
\]
where the middle row and left column are exact. Since $\Ext_\Pi^1(x_1,x)=0$, $g\circ f=0$.
On the other hand, $g$ (resp., $f$) is onto since $\Ext_\Pi^1(x,x_1)=0$ (resp., $\Ext_\Pi^1(x_2,x_2)=0$).
Hence, $\Ext_\Pi^1(x_1,x_1)=0$, so that $x_1$ is rigid. Similarly, the rigidity of $x$ is argued using the diagram
\[
  \begin{tikzcd}
  \Ext_\Pi^1(x_1,x_2)\arrow{r}\arrow{d}&\Ext_\Pi^1(x,x_2)\arrow{r}\arrow{d}&\Ext_\Pi^1(x_2,x_2)\\
  \Ext_\Pi^1(x_1,x)\arrow{r}           &\Ext_\Pi^1(x,x)\arrow{d}           &\\
                                       &\Ext_\Pi^1(x,x_1)                  &
  \end{tikzcd}
\]
The lemma follows.
\end{proof}

Together with Proposition \ref{prop: cancel} we conclude

\begin{corollary} \label{cor: rigidext}
Let $C=C_1*C_2$ with $C_1,C_2\in\Irrcomp$.
Suppose that $C_2$ is rigid and $C$ strongly commutes with $C_1$.
Then $C$ is rigid if and only if $C_1$ is rigid.
\end{corollary}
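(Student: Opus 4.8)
The plan is to deduce both implications, via Lemma~\ref{lem: rigidext}, from the construction of a single short exact sequence $0\to x_2\to x\to x_1\to 0$ with $x\in C$, $x_1\in C_1$ and $x_2\cong\mdl(C_2)$ (the latter rigid by hypothesis), subject to the crucial condition $\Ext^1_\Pi(x,x_1)=0$. Granting such a sequence, Lemma~\ref{lem: rigidext} shows that \emph{both} $x$ and $x_1$ are rigid; since $x\in C$ and $x_1\in C_1$, and both are irreducible components, Definition~\ref{def: irrigid} forces $C$ and $C_1$ to be rigid. Hence it suffices to build such a sequence under the assumption that one of $C$, $C_1$ is rigid. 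I expect the harder direction to be ``$C$ rigid $\Rightarrow$ $C_1$ rigid'', the delicate point being a matching step inside $C_1$ that is settled by density and upper-semicontinuity bookkeeping. Throughout I use that, by \eqref{eq: extdalty}, strong commutativity of $C$ and $C_1$ is symmetric, so the locus $\mathcal{U}=\{(z,z_1)\in C\times C_1\mid\Ext^1_\Pi(z,z_1)=0\}$ is nonempty and open.

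Assume first that $C_1$ is rigid and put $x_1=\mdl(C_1)$. Since $C_1$ and $C_2$ are rigid, $\orb(x_1)\times\orb(\mdl(C_2))$ is dense, open and $G_{V^1}\times G_{V^2}$-stable in $C_1\times C_2$; intersecting it with $S$ of \eqref{def: S} gives a nonempty, open, $G_{V^1}\times G_{V^2}$-stable $S''\subseteq S$, so $\Exts_2(S'')$ is dense in $C$ by \eqref{eq: densE}, and every $x\in\Exts_2(S'')$ sits in a sequence $0\to x_2\to x\to x_1'\to 0$ with $x_1'\cong x_1$ and $x_2\cong\mdl(C_2)$. On the other hand, as $C_1$ is rigid, $C\times\orb(x_1)$ is dense open in $C\times C_1$ and hence meets $\mathcal{U}$; since $\mathcal{U}$ is stable under $G_{V^1}$ acting on the second factor, it follows that $\{x\in C\mid\Ext^1_\Pi(x,x_1)=0\}$ is nonempty, and it is open by upper semicontinuity of $\dim\Ext^1_\Pi$. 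Any $x$ in the intersection of this open set with the dense constructible set $\Exts_2(S'')$ provides the required sequence, so $C$ is rigid.

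Assume now that $C$ is rigid and put $x=\mdl(C)$. Since $C=C_1*C_2$ lies in the image of the injective map $E\mapsto E*C_2$ of Proposition~\ref{prop: cancel} (applied with the rigid component $C_2$), the locus $\{y\in C\mid\exists\,\mdl(C_2)\hookrightarrow y\}$ is dense in $C$, and $C_1$ equals the closure of the $G_V$-saturation of $\{\Coker\phi\mid y\in S',\ \phi:\mdl(C_2)\hookrightarrow y\}$ for any nonempty, open, $G_V$-stable $S'$ inside the dense $G_V$-stable locus $\{y\in\argmin_C\dim\Hom_\Pi(\mdl(C_2),\cdot)\mid\exists\,\mdl(C_2)\hookrightarrow y\}$. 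As $C$ is rigid, $\orb(x)$ is a dense open orbit, hence contained in that locus, so we may take $S'=\orb(x)$; consequently the $G_V$-saturation of $\{\Coker\phi\mid\phi:\mdl(C_2)\hookrightarrow x\}$ is already dense in $C_1$. Also, rigidity of $C$ makes $\orb(x)\times C_1$ dense open in $C\times C_1$, hence it meets $\mathcal{U}$, so $U_1:=\{z_1\in C_1\mid\Ext^1_\Pi(x,z_1)=0\}$ is nonempty and therefore dense open in $C_1$. The two dense subsets of $C_1$ produced above must meet, so there is an embedding $\phi:\mdl(C_2)\hookrightarrow x$ whose cokernel $x_1$ satisfies $\Ext^1_\Pi(x,x_1)=0$; this is the desired sequence, and hence $C_1$ is rigid. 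In both cases the only place where a little care is needed is the density matching just described, and, as noted, it reduces to the fact that all the relevant loci are $G$-stable, so one may pass freely between points and isomorphism classes.
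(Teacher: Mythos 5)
Your proof is correct and takes essentially the same route the paper intends: the corollary is stated there as an immediate consequence of Lemma \ref{lem: rigidext} together with Proposition \ref{prop: cancel}, and your argument is precisely a careful spelling-out of that deduction (using \eqref{eq: densE} and the open-orbit characterization of rigidity for the easy direction, and the inverse map of Proposition \ref{prop: cancel} for the harder one). The density/openness bookkeeping you supply to make the two generic loci meet is exactly the content the paper leaves implicit.
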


\begin{remark}
Let $x_1$ and $x_2$ be any $\Pi$-modules. Suppose that $x$ is a rigid $\Pi$-module which is an extension of $x_1$ by $x_2$.
Then, up to isomorphism, $x$ is determined by $x_1$ and $x_2$.
This follows from \cite[Theorem 1.3(ii)]{MR1944812}.
\end{remark}

\begin{remark}
A special case of Lemma \ref{lem: rigidext}, which was proved in \cite[Proposition 5.7]{MR2242628},
is when \eqref{eq: es} does not split and $\dim\Ext_\Pi^1(x_1,x_2)=1$.
Indeed, by \cite[Lemma 5.11]{MR2242628} (which is valid for modules over any algebra $A$) under these conditions,
if $x_1$ is rigid, then $\Ext_A^1(x_1,x)=0$.
For convenience, we recall the argument. By the rigidity of $x_1$, we have an exact sequence
\begin{multline*}
0\rightarrow\Hom_A(x_1,x_2)\rightarrow\Hom_A(x_1,x)\xrightarrow\gamma\Hom_A(x_1,x_1)\xrightarrow\delta
\Ext_A^1(x_1,x_2)\rightarrow\\\Ext_A^1(x_1,x)\rightarrow0.
\end{multline*}
The assumption that \eqref{eq: es} does not split means that $\delta$ is nonzero. Since $\Ext_A^1(x_1,x_2)$ is one-dimensional,
we infer that $\delta$ is surjective. Consequently, $\Ext_A^1(x_1,x)=0$.

A dual argument shows that if $x_2$ is rigid, then $\Ext_A^1(x,x_2)=0$.
Note however that Lemma \ref{lem: rigidext} is more general since we may have $x_1$ and $x_2$ rigid
and $\Ext^1_\Pi(x,x_1)=0$ without $\Ext_\Pi^1(x,x_2)=0$.
\end{remark}

\begin{remark} \label{rem: indct}
Suppose that $x$ is a rigid, non-simple $\Pi$-module.
Then, one may hope that there exists a short exact sequence \eqref{eq: es}
such that $x_1$ and $x_2$ are rigid and non-trivial and $\Ext_\Pi^1(x,x_1)=0$.
If true, this would give an inductive approach to understand rigid modules.
We will prove a special case in Lemma \ref{lem: homtau=0} below.
\end{remark}

We finish this section with another useful result.

\begin{lemma} \label{lem: srigid}
Suppose that $\Hom_\Pi(x_2,x_1)=0$ and $x$ is an extension of $x_1$ by $x_2$.
Let $\gamma\in\Ext^1_\Pi(x_1,x_2)$ be the class of $x$ and let
\[
\omega:\Aut_\Pi(x_1)\times\Aut_\Pi(x_2)\rightarrow\Ext^1_\Pi(x_1,x_2)
\]
be the orbit map pertaining to $\gamma$.
Then,
\[
\dim\Ext_\Pi^1(x,x)=\dim\Ext_\Pi^1(x_1,x_1)+\dim\Ext_\Pi^1(x_2,x_2)+2\dim\Coker d\omega_{e,e}.
\]
In particular, $x$ is rigid if and only if $x_1$ and $x_2$ are rigid and
the $\Aut_\Pi(x_1)\times\Aut_\Pi(x_2)$-orbit of $\gamma$ in $\Ext^1_\Pi(x_1,x_2)$ is open and separable.

Finally, if $x$ is rigid and $x_2$ is a brick (that is, $\End_\Pi(x_2)=K$), then $\Ext_\Pi^1(x_1,x)=0$.
\end{lemma}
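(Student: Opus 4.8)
The plan is to prove the dimension formula first, then deduce the rigidity criterion and the final assertion as formal consequences. The essential input is a description of $\End_\Pi(x)$. Since $\Hom_\Pi(x_2,x_1)=0$, every $\Pi$-homomorphism $x_2\to x$ has image inside the submodule $x_2$; in particular each $\theta\in\End_\Pi(x)$ carries $x_2$ into $x_2$ and hence induces a pair $(\theta_1,\theta_2)\in\End_\Pi(x_1)\times\End_\Pi(x_2)$. Choosing a $K$-linear splitting $x\cong x_1\oplus x_2$ and writing the $\Pi$-action as a lower-triangular $2\times 2$ matrix whose off-diagonal entry is a Hochschild $1$-cocycle $c$ with class $\gamma$, a direct computation (using $\Hom_\Pi(x_2,x_1)=0$ to kill the upper-right corner) shows that $(\theta_1,\theta_2)$ lifts to a $\Pi$-endomorphism of $x$ exactly when $\theta_2c-c\theta_1$ is a coboundary, i.e.\ $(\theta_2)_*\gamma=(\theta_1)^*\gamma$ in $\Ext^1_\Pi(x_1,x_2)$, and that the lifts then form a torsor under $\Hom_\Pi(x_1,x_2)$. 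Since $d\omega_{e,e}(\psi_1,\psi_2)=(\psi_2)_*\gamma-(\psi_1)^*\gamma$ (the linearization of $(\phi_1,\phi_2)\mapsto(\phi_2)_*(\phi_1^{-1})^*\gamma$), this lifting condition cuts out precisely $\Ker d\omega_{e,e}$, giving a short exact sequence
\begin{equation*}
0\to\Hom_\Pi(x_1,x_2)\to\End_\Pi(x)\to\Ker d\omega_{e,e}\to 0.
\end{equation*}
(The same sequence can alternatively be spliced out of the long exact $\Ext$-sequences of $0\to x_2\to x\to x_1\to 0$, again using $\Hom_\Pi(x_2,x_1)=0$.)

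Next I would assemble the dimension formula. Put $\bfd_i=\grdim x_i$ and $\bfd=\grdim x=\bfd_1+\bfd_2$, so $(\bfd,\bfd)=(\bfd_1,\bfd_1)+2(\bfd_1,\bfd_2)+(\bfd_2,\bfd_2)$ by bi-additivity. From the exact sequence above, $\dim\End_\Pi(x)=\dim\Hom_\Pi(x_1,x_2)+\dim\Ker d\omega_{e,e}$, and since $\dim\operatorname{Im}d\omega_{e,e}=\dim\Ext^1_\Pi(x_1,x_2)-\dim\Coker d\omega_{e,e}$ we get $\dim\Ker d\omega_{e,e}=\dim\End_\Pi(x_1)+\dim\End_\Pi(x_2)-\dim\Ext^1_\Pi(x_1,x_2)+\dim\Coker d\omega_{e,e}$. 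Using \eqref{eq: CB} together with $\Hom_\Pi(x_2,x_1)=0$ to rewrite $\dim\Hom_\Pi(x_1,x_2)-\dim\Ext^1_\Pi(x_1,x_2)=(\bfd_1,\bfd_2)$, and then applying \eqref{eq: CB2} to $x$, $x_1$ and $x_2$, the stated identity $\dim\Ext^1_\Pi(x,x)=\dim\Ext^1_\Pi(x_1,x_1)+\dim\Ext^1_\Pi(x_2,x_2)+2\dim\Coker d\omega_{e,e}$ drops out after routine cancellation. The rigidity criterion is then immediate: the three summands on the right are non-negative, so $x$ is rigid iff $x_1$ and $x_2$ are rigid and $\Coker d\omega_{e,e}=0$; and $\Coker d\omega_{e,e}=0$ says exactly that $d\omega_{e,e}$ surjects onto $\Ext^1_\Pi(x_1,x_2)=T_\gamma\Ext^1_\Pi(x_1,x_2)$. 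By the standard fact that the differential at the identity of an orbit map surjects onto the tangent space of the ambient variety if and only if the orbit is open and the orbit map is separable (the latter automatic in characteristic $0$), this is precisely the condition that the $\Aut_\Pi(x_1)\times\Aut_\Pi(x_2)$-orbit of $\gamma$ be open and separable.

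For the final claim, suppose $x$ is rigid and $\End_\Pi(x_2)=K$. Applying $\Hom_\Pi(x_1,-)$ to $0\to x_2\to x\to x_1\to 0$ produces a connecting map $\delta_1\colon\End_\Pi(x_1)\to\Ext^1_\Pi(x_1,x_2)$, $\psi\mapsto(\psi)^*\gamma$, with $\delta_1(\mathrm{id})=\gamma$; since $x_1$ is rigid (by the criterion) we have $\Ext^1_\Pi(x_1,x_1)=0$, so the sequence gives $\Ext^1_\Pi(x_1,x)\cong\Ext^1_\Pi(x_1,x_2)/\operatorname{Im}\delta_1$, and it suffices to show $\delta_1$ is onto. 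Rigidity of $x$ gives $\Coker d\omega_{e,e}=0$, i.e.\ $\operatorname{Im}d\omega_{e,e}=\Ext^1_\Pi(x_1,x_2)$; writing $d\omega_{e,e}(\psi_1,\psi_2)=\delta_2(\psi_2)-\delta_1(\psi_1)$, where $\delta_2\colon\End_\Pi(x_2)\to\Ext^1_\Pi(x_1,x_2)$, $\psi\mapsto(\psi)_*\gamma$, is the connecting map from $\Hom_\Pi(-,x_2)$, this reads $\operatorname{Im}\delta_1+\operatorname{Im}\delta_2=\Ext^1_\Pi(x_1,x_2)$. But $\End_\Pi(x_2)=K$ forces $\operatorname{Im}\delta_2=K\delta_2(\mathrm{id})=K\gamma$, and $\gamma=\delta_1(\mathrm{id})\in\operatorname{Im}\delta_1$, so $\operatorname{Im}\delta_1+\operatorname{Im}\delta_2=\operatorname{Im}\delta_1$; hence $\delta_1$ is surjective and $\Ext^1_\Pi(x_1,x)=0$.

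The step I expect to need the most care is the description of $\End_\Pi(x)$ in the first paragraph: verifying that the obstruction to lifting $(\theta_1,\theta_2)$ is the vanishing of $(\theta_2)_*\gamma-(\theta_1)^*\gamma$, and identifying this expression with $d\omega_{e,e}(\theta_1,\theta_2)$ under the identification $\operatorname{Lie}(\Aut_\Pi(x_i))=\End_\Pi(x_i)$ — together with the small amount of characteristic-$p$ attention in the orbit-map/separability dictionary of the second paragraph. Everything else is long-exact-sequence chasing and dimension bookkeeping built on \eqref{eq: CB} and \eqref{eq: CB2}.
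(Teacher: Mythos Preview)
Your argument is correct. The route to the main dimension formula differs from the paper's: you compute $\dim\End_\Pi(x)$ via the short exact sequence $0\to\Hom_\Pi(x_1,x_2)\to\End_\Pi(x)\to\Ker d\omega_{e,e}\to 0$ and then apply the Crawley--Boevey identities \eqref{eq: CB} and \eqref{eq: CB2} (plus bi-additivity of $(\cdot,\cdot)$) to convert this into a statement about $\dim\Ext^1_\Pi(x,x)$. The paper instead works directly in $\Ext^1$: it assembles a commutative diagram of $\Hom$ and $\Ext^1$ groups from the long exact sequences of $0\to x_2\to x\to x_1\to 0$ in both variables, uses the switch-duality \eqref{eq: extdalty} to equate $\dim\Coker\alpha$ with $\dim\Im\beta$, and uses \eqref{eq: extsrjct} to obtain $\Ext^2_\Pi(x_1,x_2)=0$ from $\Hom_\Pi(x_2,x_1)=0$ (needed for the surjections at the ends of the diagram). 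Your approach is more elementary in that it bypasses \eqref{eq: extdalty} and \eqref{eq: extsrjct} entirely, relying only on the numerical formulas; the paper's diagram chase, on the other hand, exposes more of the internal structure of $\Ext^1_\Pi(x,x)$ and never detours through $\End_\Pi(x)$. For the final assertion (the brick case) the two proofs are essentially the same: once one notes $\Im\delta_2=K\gamma\subset\Im\delta_1$, the vanishing of $\Ext^1_\Pi(x_1,x)$ follows at once.
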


\begin{proof}
Consider the commutative diagram
\[
  \begin{tikzcd}
  &\End_\Pi(x_1)\arrow[r,equal]\arrow[d,"\partial_1"]&\Hom_\Pi(x,x_1)\arrow[d]&\\
\End_\Pi(x_2)\arrow[r,"\partial_2"]\arrow[d,equal]  & \Ext_\Pi^1(x_1,x_2)\arrow[r]\arrow[d,"\iota^1"]&
\Ext_\Pi^1(x,x_2)\arrow[r,two heads]\arrow[d,"\beta"]&\Ext_\Pi^1(x_2,x_2)\arrow[d,hookrightarrow]\\
\Hom_\Pi(x_2,x)\arrow[r,"\partial_x"]&\Ext_\Pi^1(x_1,x)\arrow[r,"\alpha"]\arrow[d,two heads]&\Ext_\Pi^1(x,x)\arrow[r,"\beta^*"]
\arrow[d,"\alpha^*"]&\Ext_\Pi^1(x_2,x)\arrow[d]\\
&\Ext_\Pi^1(x_1,x_1)\arrow[r,hookrightarrow]&\Ext_\Pi^1(x,x_1)\arrow[r]&\Ext_\Pi^1(x_2,x_1)
 \end{tikzcd}
\]
with exact rows and columns. Here we used the relation \eqref{eq: extdalty} as well as the assumption
$\Hom_\Pi(x_2,x_1)=0$, which implies that $\Ext_\Pi^2(x_1,x_2)=0$ by \eqref{eq: extsrjct}.
Observe that
\begin{gather*}
\dim\Im\alpha=\dim\Coker\partial_x=\dim\Coker(\iota^1\partial_2)\\=
\dim\Coker\iota^1+\dim\Coker\partial_{12}=\dim\Ext^1_\Pi(x_1,x_1)+\dim\Coker\partial_{12}
\end{gather*}
where
\[
\partial_{12}=(-\partial_1)\oplus\partial_2:\End_\Pi(x_1)\oplus\End_\Pi(x_2)\rightarrow\Ext_\Pi^1(x_1,x_2).
\]
Similarly,
\[
\dim\Im\beta=\dim\Ext^1_\Pi(x_2,x_2)+\dim\Coker\partial_{12}.
\]
Thus,
\begin{gather*}
\dim\Ext_\Pi^1(x,x)=\dim\Im\alpha+\dim\Coker\alpha=\dim\Im\alpha+\dim\Ker\alpha^*\\=
\dim\Im\alpha+\dim\Im\beta\\=\dim\Ext^1_\Pi(x_1,x_1)+\dim\Ext^1_\Pi(x_2,x_2)+2\dim\Coker\partial_{12}.
\end{gather*}
It remains to observe that $\partial_{12}$ is the differential of $\omega$ at the identity.

For the last part, the assumption that $x_2$ is a brick implies that $\Im\partial_2\subset\Im\partial_1$.
Hence, $\partial_x=\iota^1\partial_2=0$ and therefore $\Ext_\Pi^1(x_1,x)=0$ since by assumption, $\alpha=0$.
\end{proof}

\begin{corollary} \label{cor: srigid}
Let $C_1,C_2\in\Irrcomp$ and $C=C_1*C_2$. Suppose that $C$ is rigid and $\hom_\Pi(C_2,C_1)=0$.
Then, $C_1$ and $C_2$ are rigid.
\end{corollary}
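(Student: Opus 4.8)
The plan is to produce a single rigid module $x$ belonging to $C$ which fits in a short exact sequence
\[
0\rightarrow x_2\rightarrow x\rightarrow x_1\rightarrow 0
\]
with $x_1\in C_1$, $x_2\in C_2$ and, crucially, $\Hom_\Pi(x_2,x_1)=0$. Once such an $x$ is in hand, Lemma \ref{lem: srigid} applies directly to this sequence: under the hypothesis $\Hom_\Pi(x_2,x_1)=0$ it asserts (among other things) that $x$ being rigid forces $x_1$ and $x_2$ to be rigid. Then $C_1$ and $C_2$ are rigid because, by Definition \ref{def: irrigid}, an irreducible component containing a rigid module is rigid.

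To construct such an $x$, I would begin with $S=\argmin_{C_1\times C_2}\dim\Ext_\Pi^1(x_1,x_2)$, the open, $G_{V^1}\times G_{V^2}$-stable subset used to define $C_1*C_2$, and intersect it with the locus where $\Hom_\Pi(x_2,x_1)$ vanishes:
\[
S'=S\cap\{(x_1,x_2)\in C_1\times C_2\mid\Hom_\Pi(x_2,x_1)=0\}.
\]
Since $\dim\Hom_\Pi(x_2,x_1)$ is upper semicontinuous on $C_1\times C_2$ and its minimum $\hom_\Pi(C_2,C_1)$ is assumed to be $0$, the vanishing locus is open, nonempty and stable under $G_{V^1}\times G_{V^2}$; hence, as $C_1\times C_2$ is irreducible, $S'$ is itself open, nonempty and $G_{V^1}\times G_{V^2}$-stable. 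Theorem \ref{thm: app}, specifically \eqref{eq: densE}, then guarantees that $\Exts_2(S')$ is dense in $C$. As $C$ is rigid, its rigid modules form a dense open $G_V$-orbit; since $\Exts_2(S')$ is constructible and dense it must meet this orbit. Any rigid $x$ in this intersection comes, by the definition of $\Exts_2$, equipped with a short exact sequence of the desired shape with $(x_1,x_2)\in S'$, so in particular $\Hom_\Pi(x_2,x_1)=0$ and $x_i\in C_i$, and the reduction to Lemma \ref{lem: srigid} is complete.

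I expect this argument to be essentially routine once Lemma \ref{lem: srigid} is granted. The only step deserving care is the passage from ``$\Exts_2(S')$ is dense in $C$'' to ``$\Exts_2(S')$ contains a rigid module of $C$'', which rests on the constructibility of $\Exts_2(S')$ together with the openness and density of the rigid locus in a rigid component --- both facts already established in \S\ref{sec: binary} and \S\ref{sec: rigid}. I do not anticipate a genuine obstacle.
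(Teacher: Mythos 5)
Your proposal is correct and matches the paper's (implicit) argument: the corollary is stated as an immediate consequence of Lemma \ref{lem: srigid}, obtained exactly as you describe by taking a rigid module in the dense subset $\Exts_2(S')$ of $C$, where $S'$ is the generic locus on which $\Hom_\Pi(x_2,x_1)=0$. The care you take with openness, $G$-stability, and density via \eqref{eq: densE} is precisely what makes the reduction to the lemma legitimate.
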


\section{Data for \texorpdfstring{$\Pi$}{Pi}-modules\texorpdfstring{\footnote{We take the opportunity to thank
the referee once again for simplifying the discussion of this section.}}{}} \label{sec: taudata}

Let $M$ be a $Q$-representation with defining data $\mu_h:M_{h'}\rightarrow M_{h''}$, $h\in\Omega$.
In order to extend $M$ to a representation of $\Pi$ we need to endow $M$ with a structure
of a $Q^{\op}$-representation (on the same underlying $I$-graded vector space) such the resulting $K\overline{Q}$-module structure
descends to $\Pi$. This data is the same as an element of the kernel of the linear transformation
\[
\oplus_{h\in\Omega}\Lin(M_{h''},M_{h'})\rightarrow\oplus_{i\in I}\Lin(M_i,M_i)
\]
given by
\[
(A_h)_{h\in\Omega}\mapsto(\sum_{h\in\Omega\mid h'=i}A_h\mu_h-\sum_{h\in\Omega\mid h''=i}\mu_hA_h)_{i\in I}.
\]
We can identify this transformation with $\alpha_{\mu,\mu}^*$ (see \eqref{def: alphaMN}).
By \eqref{eq: ESQ} we have $\Ext_Q^1(M,M)^*=\Coker(\alpha_{\mu,\mu})^*=\Ker(\alpha_{\mu,\mu}^*)$,
so we can view a $\Pi$-module as a $Q$-representation $M$ together with an element of $\Ext_Q^1(M,M)^*$.
Alternatively, by Voigt's Lemma this is the same as an element of the conormal of the $G_V$-orbit of $M$ in $R_Q(V)$.
Geometrically, this reflects the fact that the fiber over $0$ of the moment map \eqref{eq: mmap} is the union of the conormal bundles of the $G_V$-orbits
in $R_Q(V)$.
This discussion is closely related to \cite{MR1648647} except that we do not
explicitly use the Auslander--Reiten translation functor. We will comment further about it
at the end of the section.

Next, we describe $\Hom_\Pi$ and $\Ext^1_\Pi$ in this picture.
Since $\Ext(\cdot,\cdot)$ is a bifunctor, for any $Q$-representations $M$ and $N$ we have a canonical bilinear map
\[
\Hom_Q(M,N)\times\Ext^1_Q(N,M)\rightarrow\Ext_Q^1(M,M)\oplus\Ext_Q^1(N,N).
\]
Dually, for any $f\in\Hom_Q(M,N)$ we get a linear map
\[
f^*:\Ext_Q^1(M,M)^*\oplus\Ext_Q^1(N,N)^*\rightarrow\Ext^1_Q(N,M)^*.
\]
Now, if $M$ and $N$ are $\Pi$-modules with data
\[
\dat\in\Ext_Q^1(M,M)^*,\ \dat'\in\Ext_Q^1(N,N)^*,
\]
then we get a linear map
\begin{equation} \label{eq: deflinT}
\linT_{M;N}:\Hom_Q(M,N)\longrightarrow\Ext_Q^1(N,M)^*
\end{equation}
by
\[
\linT_{M;N}f=f^*(\dat,-\dat').
\]

The following is a slight elaboration on \cite[\S8]{MR2360317} which is probably well-known to experts.
For convenience, we include some details.

\begin{proposition} \label{prop: lesdual}
Let $M$ and $N$ be two $\Pi$-modules. Then, we have a functorial long exact sequence
\begin{equation} \label{eq: les22}
\begin{aligned}
0\rightarrow&\Hom_\Pi(M,N)\rightarrow\Hom_Q(M,N)\xrightarrow{\linT_{M;N}}\Ext^1_Q(N,M)^*\rightarrow\\
&\Ext_\Pi^1(M,N)\rightarrow\Ext_Q^1(M,N)\xrightarrow{\linT_{N;M}^*}
\Hom_Q(N,M)^*\rightarrow\\&\Hom_\Pi(N,M)^*\rightarrow0.
\end{aligned}
\end{equation}
In particular,
\begin{equation}
\Hom_\Pi(M,N)=\Ker\linT_{M;N}
\end{equation}
and we have a functorial short exact sequence
\[
0\rightarrow\Coker(\linT_{M;N})\rightarrow\Ext_\Pi^1(M,N)\rightarrow
\Ker(\linT_{N;M}^*)=\Coker(\linT_{N;M})^*\rightarrow0.
\]
\end{proposition}

\begin{proof}
Let $\mu_h:M_{h'}\rightarrow M_{h''}$, $\mu'_h:M_{h''}\rightarrow M_{h'}$
(resp., $\nu_h:N_{h'}\rightarrow N_{h''}$, $\nu'_h:N_{h''}\rightarrow N_{h'}$), $h\in\Omega$ be the data defining $M$
(resp. $N$). Set
\[
X=Z=\oplus_{i\in I}\Lin(M_i,N_i)=\oplus_{i\in I}\Lin(N_i,M_i)^*
\]
and $Y=Y_1\oplus Y_2$ where
\begin{gather*}
Y_1=\oplus_{h\in\Omega}\Lin(M_{h'},N_{h''})=\oplus_{h\in\Omega}\Lin(N_{h''},M_{h'})^*,\\
Y_2=\oplus_{h\in\Omega}\Lin(M_{h''},N_{h'})=\oplus_{h\in\Omega}\Lin(N_{h'},M_{h''})^*.
\end{gather*}
By the description of \cite[\S8]{MR2360317}, the cohomologies of the complex
\[
X\xrightarrow{f}Y\xrightarrow{g}Z
\]
are $\Hom_\Pi(M,N)$, $\Ext^1_\Pi(M,N)$ and $\Hom_\Pi(N,M)^*$ where in the notation of \eqref{def: alphaMN}
$f=\alpha_{\mu,\nu}\oplus\alpha_{\mu',\nu'}$ and $g=\alpha_{\nu',\mu'}^*\oplus\alpha_{\nu,\mu}^*$.
The exactness of \eqref{eq: les22} follows from the lemma below together with \eqref{eq: ESQ}.
\end{proof}

\begin{lemma}
Suppose that in an Abelian category we have a complex
\[
X\xrightarrow{f}Y\xrightarrow{g}Z
\]
and a short exact sequence
\[
0\rightarrow Y_2\rightarrow Y\rightarrow Y_1\rightarrow 0.
\]
Let $f_1$ be the projection of $f$ to $Y_1$ and let $g_2$ be restriction of $g$ to $Y_2$.
Then, we have an exact sequence
\begin{align*}
0\rightarrow\Ker f\rightarrow&\Ker f_1\xrightarrow{f}\Ker g_2\rightarrow\Ker g/\Im f
\rightarrow\\&\Coker f_1\xrightarrow{g}\Coker g_2\rightarrow\Coker g\rightarrow0.
\end{align*}
\end{lemma}

Indeed, this is just the long exact sequence in cohomology pertaining to the short exact sequence of complexes
\[
  \begin{tikzcd}
0\arrow{r}\arrow{d}&0\arrow{r}\arrow{d}&X\arrow[r,equal]\arrow{d}{f}&X\arrow{r}\arrow{d}{f_1}&0\arrow{d}\\
0\arrow{r}\arrow{d}&Y_2\arrow{r}\arrow{d}{g_2}&Y\arrow{r}\arrow{d}{g}&Y_1\arrow{r}\arrow{d}&0\arrow{d}\\
0\arrow{r}&Z\arrow[r,equal]&Z\arrow{r}&0\arrow{r}&0
  \end{tikzcd}
\]

\begin{corollary} \label{cor: dimext}
In the notation above we have
\[
\dim\Ext_\Pi^1(M,N)=\dim\Coker\linT_{M;N}+\dim\Coker\linT_{N;M}.
\]
In particular, $\Ext_\Pi^1(M,N)=0$ if and only if both $\linT_{M;N}$ and $\linT_{N;M}$ are surjective.
\end{corollary}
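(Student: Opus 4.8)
The plan is to reduce everything to a dimension count combining three facts already available: Crawley--Boevey's formula \eqref{eq: CB}, the identification $\Hom_\Pi(M,N)=\Ker\linT_{M;N}$ of \eqref{eq: HOMPI}, and the Euler-type identity for $(\bfd,\bfe)$ displayed just before the statement. First I would rewrite \eqref{eq: CB} in the form
\[
\dim\Ext_\Pi^1(M,N)=\dim\Hom_\Pi(M,N)+\dim\Hom_\Pi(N,M)-(\bfd,\bfe),
\]
so that it suffices to prove that the right-hand side equals $\dim\Coker\linT_{M;N}+\dim\Coker\linT_{N;M}$.

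The engine of the argument is the elementary observation that for a linear map $T\colon U\to W$ of finite-dimensional vector spaces one has $\dim\Ker T-\dim\Coker T=\dim U-\dim W$, both sides being $(\dim U-\dim\Im T)-(\dim W-\dim\Im T)$. Applying this to $\linT_{M;N}$, whose source and target are $\Hom_Q(\pi_Q(M),\pi_Q(N))$ and $\Hom_Q(\pi_Q(M),\tau(\pi_Q(N)))$ by \eqref{eq: deflinT}, and using \eqref{eq: HOMPI}, I get (writing $M$ for $\pi_Q(M)$ as a $Q$-representation, and likewise for $N$)
\[
\dim\Hom_\Pi(M,N)-\dim\Coker\linT_{M;N}=\dim\Hom_Q(M,N)-\dim\Hom_Q(M,\tau N),
\]
and symmetrically
\[
\dim\Hom_\Pi(N,M)-\dim\Coker\linT_{N;M}=\dim\Hom_Q(N,M)-\dim\Hom_Q(N,\tau M).
\]

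Adding the last two identities and substituting the Euler-type formula
\[
(\bfd,\bfe)=\dim\Hom_Q(M,N)-\dim\Hom_Q(N,\tau M)+\dim\Hom_Q(N,M)-\dim\Hom_Q(M,\tau N)
\]
for the resulting right-hand side gives
\[
\dim\Hom_\Pi(M,N)+\dim\Hom_\Pi(N,M)-\dim\Coker\linT_{M;N}-\dim\Coker\linT_{N;M}=(\bfd,\bfe),
\]
which, combined with the rewritten \eqref{eq: CB}, yields the claimed formula $\dim\Ext_\Pi^1(M,N)=\dim\Coker\linT_{M;N}+\dim\Coker\linT_{N;M}$. The ``in particular'' statement is then automatic: a sum of two nonnegative integers vanishes exactly when both vanish, and $\dim\Coker\linT_{M;N}=0$ is precisely the surjectivity of $\linT_{M;N}$. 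I expect no real obstacle; the only step demanding attention is the bookkeeping in the second paragraph---correctly matching the source and target of $\linT_{M;N}$ and $\linT_{N;M}$ in \eqref{eq: deflinT} against the four $\Hom_Q$-terms of the Euler identity, and keeping track of which of $\tau M$ and $\tau N$ pairs with which argument.
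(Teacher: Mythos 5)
Your argument is correct and is exactly the paper's own derivation: the corollary is stated there as an immediate consequence of combining the Euler-type identity for $(\bfd,\bfe)$ with \eqref{eq: CB}, using \eqref{eq: HOMPI} and rank–nullity just as you do. No issues.
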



\begin{corollary} \label{cor: scc}
Suppose that $Q$ is of Dynkin type. Then, for any $\m,\n\in\Mult$,
\begin{multline} \label{eq: C12}
\prm_Q(\m)*\prm_Q(\n)=\prm_Q(\m+\n)\iff\\\linT_{N;M}\text{ is surjective for some }(M,N)\in\cn_Q(\m)\times\cn_Q(\n).
\end{multline}
(The latter is an open condition in $(M,N)$.)
In particular,
\begin{multline} \label{eq: scc}
\text{$\prm_Q(\m)$ and $\prm_Q(\n)$ strongly commute }\iff\\
\prm_Q(\m)*\prm_Q(\n)=\prm_Q(\m+\n)=\prm_Q(\n)*\prm_Q(\m).
\end{multline}
\end{corollary}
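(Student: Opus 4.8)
The strategy is to prove \eqref{eq: C12} first and then deduce \eqref{eq: scc} from it together with Corollary \ref{cor: dimext}. Write $C_1=\prm_Q(\m)$, $C_2=\prm_Q(\n)$ and fix $V\in\Vgr(\grdim(\m+\n))$. Since $Q$ is Dynkin, $\Lambda(V)=R_\Pi(V)$ is the disjoint union of the conormal bundles $\cn_Q(\m')$, each $\cn_Q(\m')=\pi_Q^{-1}(\orb(M_Q(\m')))$ is open and dense in the component $\prm_Q(\m')$, and $M_Q(\m+\n)\simeq M_Q(\m)\oplus M_Q(\n)$. The first point I would record is that an irreducible component $C$ of $\Lambda(V)$ equals $\prm_Q(\m')$ exactly when its generic point $x$ satisfies $\pi_Q(x)\simeq M_Q(\m')$: then $x\in\cn_Q(\m')\subseteq\prm_Q(\m')$, so $C=\overline{\{x\}}\subseteq\prm_Q(\m')$, and both are components of the pure-dimensional variety $\Lambda(V)$. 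Since $C_1*C_2$ is a component by Theorem \ref{thm: app}, the task reduces to identifying the underlying $Q$-module of its generic point.

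Next I would analyse that generic point. By \eqref{eq: densE}, $\Exts_2(S')$ is dense in $C_1*C_2$ for any nonempty open $G_{V^1}\times G_{V^2}$-stable $S'\subseteq S=\argmin_{C_1\times C_2}\dim\Ext^1_\Pi$; take $S'$ inside $\cn_Q(\m)\times\cn_Q(\n)$ and on which $\dim\Coker\linT_{x_1;x_2}$ and $\dim\Coker\linT_{x_2;x_1}$ are constant. From the proof of Theorem \ref{thm: app}, $\Exts_2(S')$ is the image of a vector bundle over $S'$ whose generic fibre surjects onto $\Ext^1_\Pi(x_1,x_2)$; hence the generic point $x$ of $C_1*C_2$ sits in a short exact sequence $0\to x_2\to x\to x_1\to 0$ with $(x_1,x_2)$ generic in $S'$ and extension class $\xi$ generic in $\Ext^1_\Pi(x_1,x_2)$. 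Applying the exact functor $\pi_Q$ yields a $Q$-module extension $0\to M_Q(\n)\to\pi_Q(x)\to M_Q(\m)\to 0$ with class $F^1(\xi)$, where $F^1\colon\Ext^1_\Pi(x_1,x_2)\to\Ext^1_Q(M_Q(\m),M_Q(\n))$ is induced by $\pi_Q$. An elementary lemma — if the middle term of a short exact sequence of finite-dimensional modules is isomorphic to the direct sum of the sub and the quotient, then the sequence splits (compare $\dim\Hom(E,N)$ read off from $\Hom(-,N)$ applied to the sequence with the value for $E\simeq M\oplus N$) — then shows that $\pi_Q(x)\simeq M_Q(\m)\oplus M_Q(\n)=M_Q(\m+\n)$ if and only if $F^1(\xi)=0$; as $\xi$ is generic and $\ker F^1$ is a linear subspace, this holds if and only if $\im F^1=0$, i.e. $F^1\equiv 0$ for generic $(x_1,x_2)\in S'$.

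It remains to translate $\im F^1=0$ into surjectivity of a $\linT$. Here I would use the short exact sequence of the Proposition preceding this corollary, whose proof identifies the first term $\Coker(\linT_{x_1;x_2})$ with $\ker F^1$ via \eqref{eq: es11}; thus $\im F^1\simeq\Coker(\linT_{x_2;x_1})^*$ (equivalently, combine \eqref{eq: es11} with Corollary \ref{cor: dimext}), so $\im F^1=0\iff\linT_{x_2;x_1}$ surjective. Since $\dim\Coker\linT_{x_2;x_1}$ is upper semicontinuous and $\cn_Q(\m)\times\cn_Q(\n)$ is irreducible, surjectivity of $\linT_{x_2;x_1}$ for generic $(x_1,x_2)\in S'$ is equivalent to its holding for some, hence for a dense open set of, $(M,N)\in\cn_Q(\m)\times\cn_Q(\n)$ — which in the notation of the corollary is the surjectivity of $\linT_{N;M}$. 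Combining the three paragraphs gives \eqref{eq: C12}. For \eqref{eq: scc}: strong commutativity of $\prm_Q(\m)$ and $\prm_Q(\n)$ means $\Ext^1_\Pi(M,N)=0$ for some (equivalently, a dense open set of) $(M,N)\in\cn_Q(\m)\times\cn_Q(\n)$, which by Corollary \ref{cor: dimext} is the simultaneous surjectivity there of $\linT_{M;N}$ and $\linT_{N;M}$; applying \eqref{eq: C12} once as stated and once with $\m$ and $\n$ interchanged, this is exactly the conjunction $\prm_Q(\m)*\prm_Q(\n)=\prm_Q(\m+\n)$ and $\prm_Q(\n)*\prm_Q(\m)=\prm_Q(\m+\n)$.

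The step I expect to be the main obstacle is the bookkeeping in the second paragraph: checking carefully that the generic point of $C_1*C_2$ really does come from a pair generic in $S'$ together with an extension class generic in the \emph{entire} vector space $\Ext^1_\Pi(x_1,x_2)$ — this is what licenses the passage from ``$F^1(\xi)=0$'' to ``$F^1\equiv 0$'' — and, relatedly, making the ``some versus generic'' equivalences fully precise through the upper semicontinuity of the relevant cokernel dimensions.
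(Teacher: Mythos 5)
Your proposal is correct and follows essentially the same route as the paper: both reduce \eqref{eq: C12} to the vanishing of the map $F^1\colon\Ext^1_\Pi(M,N)\to\Ext^1_Q(M_Q(\m),M_Q(\n))$ via the identification $\Im F^1\simeq\Coker(\linT_{N;M})^*$ from the preceding Proposition, invoke the standard fact that a non-split extension cannot be isomorphic to the direct sum, and handle the ``some versus generic'' passages by openness of the relevant conditions on the irreducible set $\cn_Q(\m)\times\cn_Q(\n)$. The only cosmetic difference is that you phrase the converse through the generic point of the vector bundle $Z$ from the proof of Theorem \ref{thm: app}, whereas the paper uses Lusztig's description of the extension set as $G_V\cdot U$ with $U$ an affine space mapping linearly onto $\Ext^1_\Pi(M,N)$; the deduction of \eqref{eq: scc} from \eqref{eq: C12} and Corollary \ref{cor: dimext} is identical.
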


\begin{proof}
Let $M$ and $N$ be the $\Pi$-modules with underlying $Q$-representations
$M_Q(\m)$ and $M_Q(\n)$ and data
\[
\dat\in\Ext_Q^1(M_Q(\m),M_Q(\m))^*\text{ and }\dat'\in\Ext_Q^1(M_Q(\n),M_Q(\n))^*.
\]
Then, by Proposition \ref{prop: lesdual}, $\linT_{N;M}$ is surjective if and only if
\begin{equation} \label{eq: id0}
\text{the map $\Ext^1_\Pi(M,N)\longrightarrow\Ext^1_Q(M_Q(\m),M_Q(\n))$ is identically zero.}
\end{equation}
If the latter condition is satisfied for some data $\dat$ and $\dat'$, then since this condition is open,
$\Exts_2(S)\subset\cn_Q(\m+\n)$ for a nonempty open subset $S$ of $\cn_Q(\m)\times\cn_Q(\n)$.
Thus, $\prm_Q(\m)*\prm_Q(\n)\subset\overline{\cn_Q(\m+\n)}=\prm_Q(\m+\n)$, and hence $\prm_Q(\m)*\prm_Q(\n)=\prm_Q(\m+\n)$.

Conversely, if $\prm_Q(\m)*\prm_Q(\n)=\prm_Q(\m+\n)$, then by \eqref{eq: densE}, since $\cn_Q(\m+\n)$ is open in $\prm_Q(\m+\n)$,
there exists $M\in\cn_Q(\m)$ and $N\in\cn_Q(\n)$ such that the set $E$ of extensions of $M$ by $N$ is contained in $\prm_Q(\m+\n)$ and intersects $\cn_Q(\m+\n)$.
Recall that we can write $E$ as $G_V\cdot U$ where $U$ is isomorphic to
a vector space and under this isomorphism, the surjection $\kappa:U\rightarrow\Ext^1_\Pi(M,N)$ is
a quotient map of vector spaces.
Hence, $U\cap\cn_Q(\m+\n)$ is non-empty and open in $U$.
On the other hand, the composition $\tilde\kappa$ of $\kappa$
with $\Ext^1_\Pi(M,N)\rightarrow\Ext^1_Q(M_Q(\m),M_Q(\n))$ is zero
on $U\cap\cn_Q(\m+\n)$. (Here was used the standard fact
that a non-trivial extension of two finite-dimensional modules cannot be isomorphic as a module to the
direct sum.)
Hence $\tilde\kappa\equiv0$ and therefore \eqref{eq: id0} holds. Our assertion follows.
\end{proof}

We also remark that in the Dynkin case we have
\begin{equation} \label{eq: m1m2c}
\prm_Q(\m+\n)\supset\prm_Q(\m)\oplus\prm_Q(\n)
\end{equation}
for any $\m,\n\in\Mult$.

Next, we mention a partial converse to Lemma \ref{lem: rigidext} (cf.\ Remark \ref{rem: indct}).

\begin{lemma} \label{lem: homtau=0}
Let $x$ be a $\Pi$-module with underlying $Q$-representation $M$ and data $\dat\in\Ext_Q^1(M,M)^*$.
Suppose that as a $Q$-representation we have
\[
M=M_1\oplus M_2\text{ with }\Ext_Q^1(M_1,M_2)=0.
\]
Write $\dat=\sm{\dat_1}{}{*}{\dat_2}$ where $\dat_i\in\Ext_Q^1(M_i,M_i)^*$, $i=1,2$
and let $x_i$ be the $\Pi$-module with underlying $Q$-representation $M_i$ and data $\dat_i$.
Then, $x$ is an extension of $x_1$ by $x_2$.
Suppose moreover that $\Hom_Q(M_2,M_1)=0$ and $x$ is rigid.
Then $x_1$ and $x_2$ are rigid. If in addition $\End_Q(M_2)=K$, then $\Ext^1_\Pi(x,x_1)=0$.
\end{lemma}

Indeed, by the vanishing of $\Ext_Q^1(M_1,M_2)$, the projection $M\rightarrow M_1$ lies in the kernel
of $\linT_{x;x_1}$. The first part follows. The other parts follow from Lemma \ref{lem: srigid}.

\subsection*{Relation with Coxeter functor}
Finally, let us assume that $Q$ is acyclic, i.e., $KQ$ is finite-dimensional.
Although the maps $\linT_{M;N}$ defined above admit a concrete linear algebra realization
(taking into account \eqref{eq: ESQ}), at least from a computational aspect it is advantageous to consider
a different point of view using Ringel's formalism \cite{MR1648647}.

Let $\Phi^\pm$ be the Coxeter functors defined by Bernstein--Gel'fand--Ponomarev on the category $\Cat_Q$ of representations of $Q$ \cite{MR0393065}.
Then, $\Phi^-$ is the left adjoint of $\Phi^+$. Let $\epsilon=\epsilon_Q$ be the involution of $KQ$
given by multiplying each arrow by $-1$. It gives rise to an involutive auto-equivalence on $\Cat_Q$ also denoted by $\epsilon$. Let $\tau=\epsilon\Phi^+$. (Of course, $\tau$ is defined for any representation
although we will only consider finite-dimensional ones.)

By the Brenner--Butler--Gabriel theorem (see \cite[Proposition 5.3]{MR607140}), $\tau$ coincides with
the Auslander--Reiten translation functor.
Thus, since $KQ$ is hereditary,
\[
\tau M=\Ext^1_Q(M,KQ)^*.
\]
Here $KQ$ is considered as a left-module over itself; the $\Ext$ space is a right $KQ$-module, and its dual
is a left $KQ$-module.
Also, we have functorial isomorphisms
\[
\Ext_Q^1(M,N)\simeq\Hom_Q(N,\tau M)^*.
\]
Under this isomorphism, a data for a $\Pi$-representation is given by an element $\dat\in\Hom_Q(M,\tau M)$.
If $M$ and $N$ are $\Pi$-modules with data $\dat\in\Hom_Q(M,\tau M)$ and $\dat'\in\Hom_Q(N,\tau N)$, then
\begin{equation}
\linT_{M;N}:\Hom_Q(M,N)\longrightarrow\Hom_Q(M,\tau N)
\end{equation}
is given by
\[
f\mapsto\tau(f)\circ\dat-\dat'\circ f.
\]
In practice, this gives a convenient realization of $\linT_{M;N}$ (for instance,
in translating the rigidity condition of a $\Pi$-module into linear algebra using Corollary \ref{cor: dimext}).

\begin{remark} (\cite{MR545362})
As a $KQ$-module, $\Pi$ is isomorphic to $\oplus_{m\ge0}(\tau^-)^m(KQ)$ where $\tau^-=\Phi^-\epsilon$ is the left adjoint of $\tau$.
Decomposing
\[
KQ=\oplus_{i\in I}KQe_i
\]
as the direct sum of the indecomposable projective $KQ$-modules, we obtain
\begin{equation} \label{eq: sumGP}
\Pi=\oplus_{i\in I}P(i)\text{ where }P(i)=\Pi e_i\simeq\oplus_{m\ge0}(\tau^-)^m(KQe_i).
\end{equation}
If $Q$ is of Dynkin type, then only finitely many summands on the right-hand side are nonzero,
$\Pi$ is finite-dimensional, and the $P(i)$'s are precisely the indecomposable projective $\Pi$-modules.
\end{remark}

\def\cprime{$'$}

\providecommand{\bysame}{\leavevmode\hbox to3em{\hrulefill}\thinspace}
\providecommand{\MR}{\relax\ifhmode\unskip\space\fi MR }
\providecommand{\MRhref}[2]{%
  \href{http://www.ams.org/mathscinet-getitem?mr=#1}{#2}
}
\providecommand{\href}[2]{#2}

\end{document}